\documentclass[12pt, a4paper]{amsart}
\usepackage{amssymb}
\usepackage{amsmath}
\usepackage{verbatim}
\pagestyle{myheadings}
 \usepackage{color}
 
%\definecolor{darkred}{rgb}{0.75,0,0.3}

\setlength{\textwidth}{16cm}
\setlength{\textheight}{23cm}   
\setlength{\oddsidemargin}{0.0cm}
\setlength{\evensidemargin}{0.0cm}

\newcommand\AND{\quad\text{and}\quad}

\newcommand\circa{\textbf{A}}  %{\raisebox{.5pt}{\textcircled{\raisebox{-.5pt} {\textbf{a}}}}}
\newcommand\circb{\textbf{B}}  %{\raisebox{.5pt}{\textcircled{\raisebox{-1.7pt} {\textbf{b}}}}}
\newcommand\circc{\textbf{C}}  %{\raisebox{.5pt}{\textcircled{\raisebox{-.5pt} {\textbf{c}}}}}
\newcommand\circd{\textbf{D}}  %{\raisebox{.5pt}{\textcircled{\raisebox{-1.7pt} {\textbf{d}}}}}
\newcommand\circe{\textbf{F}}  %{\raisebox{.5pt}{\textcircled{\raisebox{-1.7pt} {\textbf{d}}}}}

\newcommand\ep{\varepsilon}

\newcommand\Ex{\mathsf{E}}

\newcommand\K{\mathbb K}
%{\boldsymbol{\ell}}

%{\mathbf{l}}
\newcommand\Ll{\mathsf{L}}

\newcommand\Mmin{\underline{M\!}\,}
\newcommand\Mmax{\overline{\!M}}
\newcommand\mf{\mathfrak{m}}
\newcommand\N{\mathbb N}

\newcommand\OQ{(0\,,\,\infty)^2}%\times (0\,,\,\infty)}

\newcommand\Prob{\mathbb{P}}

\newcommand\R{\mathbb R}

\newcommand\scs{\scriptstyle}

\newcommand\utau{T_{\min}}%{\underline{\tau\!}\,}

\newcommand\uno{\mathbf{1}}
 
\newcommand\Var{\operatorname{\sf Var}}

\newcommand\wt{\widetilde}

\newcommand\Z{\mathbb Z}
 
\numberwithin{equation}{section}

\newtheoremstyle{mythm}% name
  {9pt}%      Space above, empty = `usual value'
  {9pt}%      Space below
  {\itshape}% Body font
  {0pt}%         Indent amount (empty = no indent, \parindent = paraindent)
  {\bfseries}% Thm head font
  {}%        Punctuation after thm head
  { }% Space after thm head: \newline = linebreak
  {\thmnumber{(#2)}\thmname{ #1}\thmnote{ #3}}%         Thm head spec

\newtheoremstyle{mydef}% name
  {9pt}%      Space above, empty = `usual value'
  {9pt}%      Space below
  {\normalfont}% Body font
  {0pt}%         Indensf suppt amount (empty = no indent, \parindent = paraindent)
  {\bfseries}% Thm head font
  {}%        Punctuation after thm head
  { }% Space after thm head: \newline = linebreak
  {\thmnumber{(#2)}\thmname{ #1}\thmnote{ #3}}%         Thm head spec

\theoremstyle{mythm}
\newtheorem{thm}[equation]{Theorem.}
\newtheorem{pro}[equation]{Proposition.}
\newtheorem{lem}[equation]{Lemma.}

\theoremstyle{mydef}
\newtheorem{ass}[equation]{Basic Assumption.}
\newtheorem{dfn}[equation]{Definition.}

\begin{document}$\,$ \vspace{-1truecm}
\title{Recurrence of 2-dimensional queueing processes,\\ and
random walk exit times from the quadrant}
\author{\bf Marc PEIGN\'E and Wolfgang WOESS}% \\ \ \\
\address{\parbox{.8\linewidth}{Laboratoire de Math\'ematiques
et Physique Th\'eorique\\
Universit\'e Francois-Rabelais Tours\\
F\'ed\'eration Denis Poisson -- CNRS\\
Parc de Grandmont, 37200 Tours, France\\}}
\email{peigne@lmpt.univ-tours.fr}
\address{\parbox{.8\linewidth}{Institut f\"ur Diskrete Mathematik,\\ 
Technische Universit\"at Graz,\\
Steyrergasse 30, A-8010 Graz, Austria\\}}
\email{woess@TUGraz.at}
\date{June 15, 2020} 
\thanks{The first author acknowledges support by a visiting professorship
at TU Graz. 
The second author was supported by Austrian Science Fund projects FWF P31889 and W1230
as well as from the European Research Council (ERC) 
%under the European Union’s Horizon 
%2020 research and innovation programme 
under Kilian Raschel's Starting Grant Agreement No759702}

\subjclass[2010] {60G50; %Sums of independent random variables; random walks 
                  60K25;  %Queueing theory; 
                  37Hxx  %Random dynamical systems
		  }
\keywords{Queueing theory, Lindley process, recurrence, random walk in the quadrant, 
exit times}
\begin{abstract}
Let $X = (X_1, X_2)$ be a 2-dimensional random variable and $X(n), n \in \N$, a sequence of i.i.d. 
copies of $X$. The associated random walk is $S(n)= X(1) +   \cdots +X(n)$. The corresponding
absorbed-reflected walk $W(n), n \in \N$, in the first quadrant is given by 
$W(0) = x \in \R_+^2$ and
$W(n) = \max \{ 0, W(n-1) - X(n) \}$, where the maximum is taken coordinate-wise.
This is often called the Lindley process and models the waiting times in a two-server
queue. We characterize recurrence of this process, assuming suitable, rather mild
moment conditions on $X$. It turns out that this is directly related with the tail asymptotics
of the exit time of the random walk $x + S(n)$ from the quadrant, so that the main part
of this paper is devoted to an analysis of that exit time in relation with the drift vector, 
i.e., the expectation of $X$.
\end{abstract}

\maketitle

\markboth{{\sf M. Peign\'e and W. Woess}}
{{\sf 2-dimensional queueing process and random walk exit times}}
\baselineskip 15pt

\vspace*{-.5cm}

%%%%%%%%%%%%%%%%%%%%%%%%%%%%%%%%%%%
\section{Introduction}\label{sec:intro}

The waiting times in a single server queue are modeled as a Markov chain $W(n)=W^x(n)$, $n \ge 0$,
on the non-negative half-axis $\R_+\,$. Here, $x \ge 0$ is the initial value, $W(0)=x$, and
$W(n) = \max \{ 0, W(n-1) - X(n) \}$, where the $X(n)$, $n \ge 1$, are i.i.d. real random variables.
We think of this process as an absorbing-reflecting random walk on $\R_+$. Namely, setting 
$S(n)=X(1)+\dots+X(n)$, the process evolves as the random walk $x - S(n)$ as long as it stays 
non-negative. Only when it attempts to cross $0$ and become negative, the new value is
reset to $0$ before continuing. This is often called the \emph{Lindley process,}
see {\sc Lindley}~\cite{Li}. There is an extensive literature on this Markov chain, such as
the seminal paper by {\sc Kendall}~\cite{Ke} plus the references therein, 
and the monographs by {\sc Feller}~\cite{Fe2},
{\sc Borovkov}~\cite{Bo} and {\sc Asmussen}~\cite{As}.

In the present work, we are interested in the multi-dimensional case, and more precisely, the 2-dimensional
one, where the $X(n) = \bigl( X_1(n), X_2(n)\bigr)$ are i.i.d. copies of a 2-dimensional 
random variable $X = (X_1\,,X_2)$, the starting point $x$ lies in the non-negative quadrant
of $\R^2$, and the maximum in $W(n) = W^x(n) = \max \{ 0, W(n-1) - X(n) \}$ is taken coordinate-wise.
There are various types of
multi-dimensional queueing processes. A first rigorous study is due to 
{\sc Kiefer and Wolfowitz}~\cite{KiWo}. In that reference, there are several servers and only one queue, while our process corresponds to having as many queues as there are servers. {\sc Kingman}~\cite{Ki},
{\sc Flatto and McKean}~\cite{FM} and others studied the model with two servers and two queues,
were arriving customers choose the shorter queue. {\sc Greenwood and Shaked}~\cite{GrSh} touch the 
model where the process restarts from the origin whenever just one of the coordinates
becomes negative. Again, this is different from our situation, for which there are not many
references.
Regarding recurrence of the 2-dimensional Lindley process, see e.g. the recent note of {\sc Cygan and Kloas}~\cite{CyKl}. 

Another viewpoint is to consider the 2-dimensional absorbing-reflecting random walk $W^x(n)$ as a 
stochastic dynamical system (SDS) evolving in $\R_+^2\,$, as in 
the work of {\sc Leguesdron}~\cite{Le}, {\sc Peign\'e} \cite{Pe}, {\sc Benda}~\cite{Be}, \cite{Be1},
\cite{Be2}.
The viewpoint of the last references 
-- inspired by important work of {\sc Babillot, Bougerol and Elie}~\cite{BBE} -- 
has been to exploit the fact that 
this SDS is obtained by iterating a sequence of i.i.d. random contractions of
$\R_+^2$, and that it is
\emph{locally contractive.} See {\sc Peign\'e and Woess}~\cite{PeWo1}, \cite{PeWo2}
for the precise definition, an outline of relevant parts of Benda's (not easily accessible) work, and
further results concerning conservativity, ergodicity, and invariant measures.

Indeed, when the one-dimensional marginal processes are recurrent -- a well-understood
situation, and the one we are interested in here -- then our SDS is \emph{strongly contractive,}
that is 
$$
|W_n^x - W_n^y| \to 0 \quad \text{almost surely, for all }\; x,y \in \OQ\,.
$$
This implies that the process is either \emph{transient,} that is, $W^x(n) \to \infty$
almost surely, or else it is recurrent in the following sense: there is a closed, non-empty 
\emph{limit set}  $\Ll \subset \R_+^2$ such that for any open set $U$ which intersects $\Ll$
and any starting point $x$, we have 
\begin{equation}\label{eq:limitset}
\Prob[ W^x(n) \in U \; \text{for infinitely many}\; n ] =1.
\end{equation} 
A main goal of this paper is to understand when the 2-dimensional Lindley process
is recurrent. 

\begin{ass}\label{ass:basic}
Throughout this paper, we assume that
\begin{itemize}
 \item[(i)] $\;X$ is not constrained to a hyperplane, and that
\item[(ii)] $\;\Prob[X \in \OQ] > 0\,.$
\end{itemize}
 \end{ass}

This is a quite natural assumption on the support of the distribution of $X = (X_1\,,X_2)$; 
see the discussion in \S \ref{sec:Lindley}.
Based on this assumption and the preceding observations, the following is our main result concerning 
the two-dimensional Lindley process, with quite general moment assumptions for the
random variable $X$ which models the increments, and its positive and
negative parts $X_i^+ = \max\{ X_i\,,0\}$ and $X_i^- = \max\{ -X_i\,,0\}$, $i=1,2$.

\begin{thm}\label{thm:main1} \emph{(a)} If for at least one $i \in\{1,2\}$,
$$
\Ex(X_i^+) < \Ex(X_i^-) \le \infty\,,
$$
then $W(n)$ is transient.
\\[4pt]
\emph{(b)} If for both $i \in\{1,2\}$,
$$
\Ex(X_i^-) < \Ex(X_i^+) < \infty\,,
$$
then $W(n)$ is positive recurrent.
\\[4pt]
\emph{(c)} If for both $i \in\{1,2\}$,
$$
\Ex(X_i)=0 \AND 
\Ex\bigl(|X_i|^{\max\{ 2+\delta\,,\, \pi/\arccos(-\rho)\}}\bigr) <  \infty\,,
$$
where $\delta > 0$ (arbitrary) and $\rho = \rho(X_1\,,X_2)$ is the correlation coefficient of
$X_1$ and $X_2\,$, then $W(n)$ is null recurrent if $\rho \ge 0$, and transient,
otherwise.
\\[4pt]
\emph{(d)} If for some $\delta > 0$, up to a possible exchange of the two coordinates,
$$
\Ex(|X_i|^{2+\delta}) <  \infty \quad \text{for }\;i =1,2, \quad
\Ex(X_1)=0, \quad \Ex(X_2) > 0, \AND 
\Ex((X_2^-)^{3+\delta}) < \infty\,,
$$
then $W(n)$ is null recurrent.
\end{thm}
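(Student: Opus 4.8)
The plan is to establish the claimed null recurrence in case (d) by reducing it, via the general principle mentioned in the introduction, to a statement about the tail asymptotics of the exit time $\tau = \tau^x = \inf\{n : x + S(n) \notin \R_+^2\}$ of the random walk from the quadrant. Recall that the Lindley process $W^x(n)$ coincides with $x - S(n)$ (up to the coordinatewise reflection at $0$) as long as it stays in the quadrant, and that the strong contractivity established when both marginals are recurrent means that recurrence versus transience of $W(n)$ is governed by whether $\sum_n \Prob[\tau^x > n] = \infty$ for suitable $x$ (more precisely, by a Borel--Cantelli-type dichotomy whose divergent side forces the limit set \eqref{eq:limitset} to be nonempty, and whose convergent side forces $W^x(n) \to \infty$). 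In the present regime one marginal, say the second, has strictly positive drift $\Ex(X_2) > 0$, so the second coordinate $x_2 - S_2(n) \to -\infty$ almost surely; hence the first coordinate is effectively decoupled, and the event $\{\tau^x > n\}$ is, asymptotically, the event that the centered one-dimensional walk $S_1$ has not yet forced $x_1 - S_1$ below $0$, intersected with a term controlling the positive-drift coordinate.

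The key steps, in order: First I would show that null recurrence here follows once one proves that $\Prob[\tau^x > n]$ decays like $c_x/\sqrt{n}$ (up to slowly varying corrections are not even needed — a clean $n^{-1/2}$ order suffices), since $\sum 1/\sqrt n = \infty$ gives recurrence via the divergence side, while the $n^{-1/2}\to 0$ decay rules out positive recurrence (positive recurrence would require summability of return-time tails of the right kind, which a heavy $1/\sqrt n$ tail precludes — this is the standard dichotomy, so I would quote the relevant lemma from \S\ref{sec:Lindley}). Second, I would reduce the two-dimensional exit time to a one-dimensional persistence problem: because $\Ex(X_2) > 0$, the second coordinate exits the half-line $\{w_2 \ge 0\}$ after a geometrically-concentrated, hence negligible-in-the-tail number of steps on a large-deviation scale, but more carefully one needs that conditionally the \emph{first} coordinate barely notices. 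Here the moment hypothesis $\Ex((X_2^-)^{3+\delta}) < \infty$ is exactly what is needed to control the overshoot and the fluctuations of $S_2$ on the relevant time scale so that the event $\{x + S(n) \in \R_+^2\}$ is sandwiched between events of the form $\{\max_{k \le n}(x_1 - \text{(perturbed walk)})_+ \text{ survives}\}$. Third, I would invoke the classical one-dimensional persistence asymptotics for a centered random walk with finite $(2+\delta)$ moment — the probability that a centered, finite-variance walk stays below a fixed level up to time $n$ is of order $n^{-1/2}$ (Sparre Andersen / Kozlov / Vatutin--Wachtel type estimates) — and transfer this through the sandwich to get the matching $n^{-1/2}$ order for $\Prob[\tau^x > n]$.

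The main obstacle will be Step 2: making rigorous the claim that the positive-drift second coordinate does not perturb the $n^{-1/2}$ persistence rate of the first coordinate, neither inflating it (by the second coordinate surviving anomalously long and thereby constraining the joint event further) nor deflating it. The delicate point is the lower bound $\Prob[\tau^x > n] \ge c/\sqrt n$: one must exhibit a strategy whereby the first walk stays positive up to time $n$ \emph{and} the second walk also stays positive, and a priori forcing the second (positive-drift) walk to stay above $0$ for $n$ steps costs exponentially — so instead the correct picture is that $\tau^x$ is dominated by the exit of the \emph{second} coordinate, which happens quickly, and after that the Lindley dynamics reflects the second coordinate and the process becomes essentially a one-dimensional Lindley process in the first coordinate riding on a transient background. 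Thus the honest reduction is not "$\tau > n$" in the raw random-walk sense but a hitting-time decomposition: the process spends $O(1)$-tailed excursions away from the face $\{w_2 = 0\}$, and null recurrence of the whole process reduces to null recurrence of the first marginal (which is guaranteed, being centered with finite variance) plus a tightness/non-escape argument in the second coordinate near its reflecting boundary. Carrying out this decomposition cleanly — identifying the limit set as $\Ll = \R_+ \times \{0\}$ and verifying \eqref{eq:limitset} — while keeping the moment bookkeeping ($2+\delta$ for the centered coordinate, $3+\delta$ for $X_2^-$) tight, is where the real work lies; the extra power on $X_2^-$ should enter precisely in bounding the time for the reflected second coordinate to come back near $0$ and in an Abelian/Tauberian passage from a renewal estimate to the $n^{-1/2}$ tail.
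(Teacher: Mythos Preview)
Your plan contains a sign confusion that derails the whole Step~2 analysis. In the paper's conventions, the exit time is $\tau_x = \inf\{n : x + S(n) \notin \OQ\}$, i.e.\ the walk is $x + S(n)$, \emph{not} $x - S(n)$. Since $\Ex(X_2) > 0$, the second coordinate $x_2 + S_2(n)$ drifts \emph{away} from the boundary, so that $\Prob[\tau_{x_2} = \infty] > 0$ (Lemma~\ref{lem:posdrift}). Keeping the second coordinate positive for $n$ steps is therefore not exponentially costly at all; it has probability bounded away from zero. Your ``main obstacle'' for the lower bound---that forcing the positive-drift walk to stay positive costs exponentially---is a non-issue, and the elaborate excursion decomposition you propose as a workaround (analyzing the Lindley process directly on the face $\{w_2=0\}$ and identifying $\Ll = \R_+ \times \{0\}$) is both unnecessary and not what the paper does.

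The paper's actual argument for (d) runs entirely through the exit-time asymptotics $\Prob[\tau_x > n] \sim \kappa\, h(x)\, n^{-1/2}$, and the mechanism is the opposite of what you describe: one waits a short time $\nu_{x_2}(n) \le n^{1-\ep}$ until the second coordinate has climbed to a high level $\mf\, n^{1-3\ep/2}$ (which happens with probability $1 - O(n^{-(1-\ep)(1+\delta/2)})$ by a moment bound, Lemma~\ref{lem:nux}), after which $\Prob[\tau_{y_2} \le n]$ is negligible by \eqref{eq:Min} and the remaining exit is governed by the one-dimensional persistence $\Prob[\tau_{y_1} > n-k] \sim \kappa\, h_1(y_1)\, n^{-1/2}$. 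The moment condition $\Ex\bigl((X_2^-)^{3+\delta}\bigr) < \infty$ enters not in an excursion/renewal estimate for the Lindley process, but in the construction of the killed harmonic function $h(x)$ on the quadrant (Proposition~\ref{pro:harmonic}, specifically Claim~1, where one needs $\Ex(\tau_{x_2}^{2+\delta}\,;\,\tau_{x_2}<\infty) \to 0$) and in making the bound \eqref{eq:Min} strong enough. Once the $n^{-1/2}$ asymptotics are in hand, recurrence follows from Lemma~\ref{lem:relation} and the SDS contractivity argument in \S\ref{sec:Lindley}; null (rather than positive) recurrence is read off from the first marginal's invariant measure having infinite mass, not from tail-summability considerations.
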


\medskip

The two interesting cases are (c) and (d). In order to prove the theorem, we use the 
following.

\begin{dfn}\label{def:exit}
For $x \in \OQ\,$, the exit time of the random walk from the positive 
quadrant is
$$
\tau_x = \inf \{ n\ge 1 : x + S(n) \notin \OQ\}\,.
$$ 
\end{dfn}

In cases (c) and (d), $\tau_x$ is almost surely finite.
We have the following relation of the Lindley process with these exit times. 

\begin{lem}\label{lem:relation} For $x = (x_1\,,x_2) \in \OQ\,$,
$$
\Prob\bigl [ W^0(n) \in [0,x_1) \times [0,x_2) \bigr] = \Prob[ \tau_x > n].
$$
\end{lem}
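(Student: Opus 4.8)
The plan is to establish the identity by a duality (time-reversal) argument, which is the standard trick relating Lindley recursions to exit times. First I would recall the explicit solution of the Lindley recursion. Starting from $W^0(0)=0$ and iterating $W^0(k)=\max\{0,W^0(k-1)-X(k)\}$ coordinate-wise, one obtains for each coordinate $i$ the familiar formula
$$
W_i^0(n) = \max_{0\le k\le n}\Bigl( -\sum_{j=k+1}^{n} X_i(j)\Bigr) = \max_{0 \le k \le n}\bigl( S_i(k) - S_i(n)\bigr),
$$
with the empty sum ($k=n$) equal to $0$, so that the max is always $\ge 0$. Thus the event $\{W^0(n)\in[0,x_1)\times[0,x_2)\}$ is exactly the event that for both $i$ and for every $k\in\{0,\dots,n\}$ one has $S_i(k)-S_i(n) < x_i$, i.e. $x_i + \sum_{j=k+1}^n X_i(j) > 0$ for all $k$ and both $i$.

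Next I would introduce the reversed increments. Fix $n$ and set $\wt X(j) = X(n+1-j)$ for $j=1,\dots,n$; since the $X(j)$ are i.i.d., $(\wt X(1),\dots,\wt X(n))$ has the same joint law as $(X(1),\dots,X(n))$. Writing $\wt S(m) = \wt X(1)+\dots+\wt X(m)$, the partial sum $\sum_{j=k+1}^{n} X_i(j)$ becomes $\wt S_i(n-k)$ after the substitution $m=n-k$. Hence the event displayed above is, in terms of the reversed walk, the event that $x_i + \wt S_i(m) > 0$ for all $m\in\{0,\dots,n\}$ and both $i=1,2$ — that is, precisely $\{x + \wt S(m)\in\OQ \text{ for all } 0\le m\le n\}$, which by Definition~\ref{def:exit} is the event $\{\wt\tau_x > n\}$ for the exit time associated to the reversed walk. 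Taking probabilities and using that $\wt S$ has the same law as $S$ gives
$$
\Prob\bigl[W^0(n)\in[0,x_1)\times[0,x_2)\bigr] = \Prob[\wt\tau_x > n] = \Prob[\tau_x > n],
$$
as claimed.

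I expect the only genuinely delicate point to be bookkeeping with the boundary of the quadrant: the Lindley max runs over $0\le k\le n$ (including the empty sum, which forces $W_i^0(n)\ge 0$), whereas the exit time in Definition~\ref{def:exit} starts the infimum at $n\ge 1$ and uses the \emph{open} quadrant $\OQ = (0,\infty)^2$. One must check that the index $k=n$ on the Lindley side matches the trivial constraint $x_i>0$ (true since $x\in\OQ$), that the strict inequalities "$<x_i$" on the left correspond correctly to the strict "$>0$" defining $\OQ$, and that the reindexing $m=n-k$ maps $\{0,\dots,n\}$ bijectively onto itself. These are routine once the reversal is set up, so the substance of the argument is the time-reversal identification above; I would present the coordinate-wise Lindley formula as a short induction and then let the change of variables do the rest.
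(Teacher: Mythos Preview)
Your proof is correct and takes essentially the same approach as the paper: both rest on the time-reversal (duality) identity relating the Lindley recursion started at $0$ to the running minimum of the random walk, and then identify the event $\{-\Mmin_i(n)<x_i\text{ for }i=1,2\}$ with $\{\tau_x>n\}$. The paper simply invokes Feller \cite[Theorem on p.~198]{Fe2} for the distributional identity $W^0(n)\stackrel{d}{=}-\Mmin(n)$, whereas you derive the explicit almost-sure formula $W_i^0(n)=\max_{0\le k\le n}(S_i(k)-S_i(n))$ and carry out the reversal by hand; your bookkeeping on the boundary (the $k=n$ term matching $x\in\OQ$, and the strict inequalities) is accurate.
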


Thus, we are led to another topic of its own big interest, currently the object
of very active work: the tail asymptotics of exit times of random walks from 
cones, in our case, the positive quadrant. Our main source is the profound paper of
{\sc Denisov and Wachtel}~\cite{DeWa} on exit times from cones for centered 
random walks. As we shall see, this leads to statement (c) of Theorem
\ref{thm:main1}. Our main focus is on the case when one coordinate is centered and
the second one has positive drift. Exit times from cones for random walks with
drift were considered by {\sc Duraj}~\cite{Du} and {\sc Garbit and Raschel}~\cite{GaRa}, 
who however do not provide the tail asymptotics which we need. The central body of the
present paper concerns that case. We summarize.

\begin{thm}\label{thm:main2} Under the same moment conditions as in
Theorem \ref{thm:main1}, we have the following for any 
$x \in \OQ\,$, as $n \to \infty$.
\\[4pt]
In case \emph{(a)}, \quad if $\;\Ex(X_i) < 0\;$ and $r\ge 1$ 
then $\;\Ex\bigl((X_i^+)^r\bigr) < \infty\;$
implies $\;\Ex(\tau_x^r) < \infty\,$, whence  $\Prob[\tau_x > n] = \mathfrak{o}(n^{-r})$.
\\[4pt]
In case \emph{(b)}, $\quad \lim\limits_{n \to \infty} \Prob[\tau_x > n] = \Prob[\tau_x = \infty] > 0$
\\[4pt]
In case \emph{(c)}, $\quad \Prob[\tau_x > n] \sim v(x)\, n^{-1/p}\,,\quad 
\text{where} \quad p =2\arccos (-\rho)/\pi\,.$
\\[4pt]
In case \emph{(d)}, $\quad \Prob[\tau_x > n] \sim \kappa \,h(x)\, n^{-1/2}\,.$
\\[4pt] 
In (c) and (d), $v$ and $h$ are positive harmonic functions for the respective 
random walk $x + S(n)$
killed when exiting $\OQ\,$, and $\kappa > 0$.
\end{thm}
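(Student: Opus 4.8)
The plan is to prove the four cases of Theorem~\ref{thm:main2} separately, since they rest on genuinely different mechanisms, and then to derive Theorem~\ref{thm:main1} from them via Lemma~\ref{lem:relation}. For case~(a), suppose $\Ex(X_i)<0$; then $x_i+S_i(n)\to+\infty$, so $\tau_x$ is dominated by the first time the one-dimensional walk $x_i+S_i(n)$ becomes negative, i.e.\ by a one-sided exit time. I would invoke the classical moment results for ladder epochs of a one-dimensional random walk with negative drift (the ``$r$-th moment'' theory going back to Kiefer--Wolfowitz and refined by many authors): if $\Ex((X_i^+)^r)<\infty$ then the first descending ladder epoch of $-(x_i+S_i(n))$ has finite $r$-th moment. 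Since $\tau_x$ is stochastically smaller than that epoch, $\Ex(\tau_x^r)<\infty$, and $\Prob[\tau_x>n]=\mathfrak{o}(n^{-r})$ follows from Markov's inequality. Case~(b) is the easiest: with both marginals having strictly positive drift and finite mean, $x+S(n)\to(+\infty,+\infty)$ almost surely; hence $\Prob[\tau_x=\infty]>0$, and since $\{\tau_x>n\}\downarrow\{\tau_x=\infty\}$, dominated convergence gives $\Prob[\tau_x>n]\to\Prob[\tau_x=\infty]$.

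Case~(c) is the first substantial one. Here both marginals are centered with the stated moment condition, and the quadrant is a cone of opening angle $\pi/2$; after the standard linear change of variables that turns the covariance matrix into the identity, the quadrant becomes a cone $C$ of opening angle $\arccos(-\rho)$, so the relevant exponent is $p=2\arccos(-\rho)/\pi$. I would then cite the main theorem of Denisov and Wachtel~\cite{DeWa}: for a centered random walk with identity covariance and the appropriate moment assumption (their moment hypothesis is exactly what forces the exponent $\max\{2+\delta,\,\pi/\arccos(-\rho)\}$ here), the exit time from a sufficiently regular cone satisfies $\Prob[\tau_x>n]\sim v(x)\,n^{-p/2}\cdot$(constant), with $v$ the unique (up to scaling) positive harmonic function for the killed walk, homogeneous of the corresponding degree. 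The opening angle being $<\pi$ (equivalently $\rho>-1$) guarantees $C$ is convex, hence Lipschitz, so \cite{DeWa} applies directly; the case $\rho<0$ with $\arccos(-\rho)<\pi/2$ still gives a convex cone, and one checks the exponent $1/p>1$ there, which after feeding into Lemma~\ref{lem:relation} and a Borel--Cantelli / renewal argument on the Lindley process yields transience. For $\rho\ge0$ the angle is $\ge\pi/2$, $1/p\le1$, the tail is non-summable, and the process is null recurrent. The one point needing care is the boundary case $\rho=0$, where $p=1$ and one must check that the cone regularity and the harmonic function $v$ behave as claimed; this is covered by \cite{DeWa} but deserves an explicit remark.

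Case~(d) is the heart of the paper and the main obstacle, because the existing literature (Duraj~\cite{Du}, Garbit--Raschel~\cite{GaRa}) establishes that $\Prob[\tau_x>n]$ decays like a power of $n$ times an exponential correction only in genuinely drifted situations, and does \emph{not} supply the sharp asymptotic constant $\kappa$ when exactly one coordinate is centered. My approach is to decouple the two coordinates as far as possible: the second coordinate $x_2+S_2(n)$ has strictly positive drift, so conditionally it stays positive forever with positive probability, and the extra moment assumption $\Ex((X_2^-)^{3+\delta})<\infty$ is precisely what is needed to control the overshoot and to guarantee that the ``first coordinate exits first'' event dominates. Thus, up to a summable error, $\tau_x$ should behave like the one-sided exit time of the \emph{centered} walk $x_1+S_1(n)$ from the half-line $(0,\infty)$, which by the classical one-dimensional result has tail $\sim c\,n^{-1/2}$. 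The technical work is: (i) a coupling/conditioning argument separating the event $\{x_2+S_2(k)>0 \text{ for all }k\le n\}$, whose complement is exponentially small once the drift is positive and $X_2^-$ has enough moments; (ii) showing that on this event the relevant quantity is the one-dimensional persistence probability of $x_1+S_1$, handled by the Sparre Andersen / Spitzer--Baxter theory and a local limit refinement to get the constant; and (iii) identifying $h(x)$ as the positive harmonic function for the killed two-dimensional walk, which I would construct as a limit $h(x)=\lim_n \Prob[\tau_x>n]/\Prob[\tau_{x_0}>n]$ and then verify harmonicity and positivity by a Harnack-type argument together with the uniform integrability provided by the moment hypotheses. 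The delicate estimate throughout is controlling the interaction near the corner of the quadrant, where both coordinates are simultaneously small; this is where the $3+\delta$ moment on $X_2^-$ (rather than merely $2+\delta$) is used, and making that bookkeeping precise is what I expect to consume most of the proof.
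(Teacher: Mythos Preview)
Your handling of (a) and (c) matches the paper. In (b) there is a small gap: $x+S(n)\to(\infty,\infty)$ a.s.\ does not by itself give $\Prob[\tau_x=\infty]>0$ for \emph{every} $x\in\OQ$; the paper first obtains this for large $x$ via a.s.\ finiteness of $\Mmin_i=\inf_n S_i(n)$, and then invokes Basic Assumption~\ref{ass:basic}(ii) to reach arbitrary $x$.

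The serious gap is in (d). Your step (i) claims the complement of $\{x_2+S_2(k)>0\text{ for all }k\le n\}$ is ``exponentially small''; under only polynomial moment assumptions this is false, and in fact for fixed $x_2$ one has $\Prob[\tau_{x_2}\le n]\to\Prob[\tau_{x_2}<\infty]>0$, a \emph{constant}. So conditioning directly on second-coordinate survival up to time $n$ and discarding the complement cannot work. The paper's route is different: it introduces the stopping time $\nu_{x_2}(n)=\inf\{k:x_2+S_2(k)\ge \mf\,n^{1-3\ep/2}\}$, shows $\nu_{x_2}(n)\le n^{1-\ep}$ with probability $1-O(n^{-(1-\ep)(1+\delta/2)})$ (Lemma~\ref{lem:nux}), and only \emph{after} that time---when the second coordinate sits at level $\gtrsim n^{1-3\ep/2}$---does the second-coordinate exit become $o(n^{-1/2})$ via $\Prob[\Mmin_2\le-y_2]=o(y_2^{-(2+\delta)})$. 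The one-dimensional tail (Proposition~\ref{pro:dim1}, with Doney's uniformity in the starting point) is then applied over the remaining $n-\nu_{x_2}(n)$ steps. Moreover, the harmonic function $h$ is not obtained as a ratio limit---that would be circular here---but is constructed explicitly as $h(x)=x_1-\Ex\bigl(x_1+S_1(\tau_x);\tau_x<\infty\bigr)$ in Proposition~\ref{pro:harmonic}; the $(3+\delta)$-moment on $X_2^-$ enters precisely there, to secure $\Ex(\tau_{x_2}^{2+\delta};\tau_{x_2}<\infty)<\infty$ and hence the key asymptotic $h(x)/h_1(x_1)\to1$ as $x_2\to\infty$, which is what glues the stopped-time identity $\Ex\bigl(h(x+S(\nu_{x_2}(n)));\tau_x>\nu_{x_2}(n)\bigr)=h(x)$ to the one-dimensional estimate.
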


(The reader will see below why we prefer not to incorporate the constant 
$\kappa$ into $h(x)$.)

\medskip

These are the main results of the present paper. The main body of the work consists
in the proof of Theorem \ref{thm:main2}(d), preceded by (a)--(c). 

\smallskip

In \S \ref{sec:dim1}, we collect some preliminary facts concerning the one-dimensional centered
case. Furthermore, the
asymptotic behaviour of the corresponding harmonic function on $\R^+$ is crucial for us.
V. Wachtel has comunicated to us an improved version, due to himself and D. Denisov, 
of the outline in \cite[2.4]{DeWa}.
With their kind permission , this is explained in the Appendix of this paper.

In \S \ref{sec:dim2}, we provide the proofs of Theorem \ref{thm:main1}.
Only in \S \ref{sec:Lindley}, we come back to the queueing process, i.e., 
the proofs of Lemma \ref{lem:relation} 
and Theorem \ref{thm:main1} plus additional observations and a discussion.

\section{Dimension one: exit times from the half-line}\label{sec:dim1}
 
In this section only, $X$ is a one-dimensional random variable, $X(n)$, $n \in \N$, 
are i.i.d. copies of $X$, and $S(n)=X(1)+\dots+X(n)$ (with $S(0)=0$).
For $x \in (0\,,\,\infty)$,
\begin{equation}\label{eq:taux1}
\tau_x = \inf \{ n \in \N : x + S(n) \le 0 \}.
\end{equation}
We collect several facts concerning $\tau_x$ and related random variables, such as
\begin{equation}\label{eq:minmax}
\Mmin(n) = \min \{ S(k) : 0 \le k \le n\} \AND \Mmax(n) = \max \{ S(k) : 0 \le k \le n\}.
\end{equation}

For the following, see the monograph by {\sc Gut}~\cite[Thm. 3.1]{Gu}.

\begin{lem}\label{lem:negdrift}
If $X$ is integrable and $\Ex(X) < 0$ then $\Ex(\tau_x) < \infty\,$. In addition, for any $r \ge 1$,
$$
\Ex\bigl((X^+)^r\bigr) < \infty \iff \Ex(\tau_x^r) < \infty\,.
$$
\end{lem}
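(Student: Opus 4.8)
Since this is a classical first--passage result (it is Theorem~3.1 in the cited monograph~\cite{Gu}), I would only recall the shape of the argument, after reducing to a random walk with \emph{positive} drift. Put $Z(k)=-X(k)$, $T(n)=Z(1)+\dots+Z(n)$ and $\mu=\Ex(Z)=-\Ex(X)>0$. The strong law of large numbers gives $T(n)/n\to\mu$, hence $x+S(n)=x-T(n)\to-\infty$ almost surely, so $\tau_x<\infty$ a.s.\ and $\tau_x=\inf\{n:T(n)\ge x\}$ is the first passage time of $T$ above level $x$. Since $Z^-=X^+$, the assertion to be proved is: for each $r\ge1$, $\;\Ex(\tau_x^{\,r})<\infty\iff\Ex((Z^-)^r)<\infty$; the first sentence of the lemma is the special case $r=1$, valid because integrability of $X$ entails $\Ex(Z^-)=\Ex(X^+)<\infty$.

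\emph{Necessity} ($\Ex(\tau_x^{\,r})<\infty\Rightarrow\Ex((X^+)^r)<\infty$). Here the ``one big jump'' lower bound suffices. Consider the two independent events $\{X(1)>2|\Ex(X)|\,n\}$ and $\{\,(S(m)-S(1))+\tfrac32|\Ex(X)|\,m\ge0\ \text{for all}\ m\ge1\,\}$; the second concerns a random walk with positive drift $\tfrac12|\Ex(X)|$ and therefore has a fixed probability $c_0>0$. On their intersection one checks that $x+S(m)\ge x+\tfrac12|\Ex(X)|\,n>0$ for all $m\le n$, so $\tau_x>n$. Thus $\Prob[\tau_x>n]\ge c_0\,\Prob[X^+>2|\Ex(X)|\,n]$, and summing $n^{r-1}$ against this bound gives $\Ex(\tau_x^{\,r})\ge c_1\,\Ex((X^+)^r)$; finiteness of the left side forces finiteness of the right.

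\emph{Sufficiency} is the substantive direction. I would decompose $T$ along its strict ascending ladder epochs $0=\sigma_0<\sigma_1<\sigma_2<\dots$ with ladder heights $H_j=T(\sigma_j)-T(\sigma_{j-1})>0$, the pairs $(\sigma_j-\sigma_{j-1},H_j)$ being i.i.d. Because the level $x$ is first reached at a record time, one has the exact identity $\tau_x=\sigma_{N_x}$, where $N_x=\inf\{m:H_1+\dots+H_m\ge x\}$ is a first passage time of the \emph{genuine} renewal process $\{H_1+\dots+H_m\}$; since the $H_j$ are strictly positive, $\Prob[N_x>m]=\Prob[H_1+\dots+H_m<x]$ decays exponentially in $m$, so $N_x$ has moments of every order. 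What is left --- and this is where I expect the real work to lie --- is the moment transfer for a single cycle: $\Ex(\sigma_1^{\,r})<\infty$ whenever $\Ex((Z^-)^r)<\infty$. Heuristically $\sigma_1$ is the time the walk needs to recover from the initial dip $Z(1)^-$, so it should be comparable in $L^r$ to $Z^-$, but the rigorous proof needs Wiener--Hopf/duality identities relating the ascending ladder epoch to the negative part of the steps. Granting that, one reassembles: for $r=1$ by Wald's identity, legitimate since $N_x$ is a stopping time for the filtration of the pairs with $\{N_x\ge j\}$ independent of the $j$-th pair, whence $\Ex(\tau_x)=\Ex(N_x)\,\Ex(\sigma_1)<\infty$; and for general $r$ from $\tau_x^{\,r}=\bigl(\sum_{j\le N_x}(\sigma_j-\sigma_{j-1})\bigr)^r\le N_x^{\,r-1}\sum_{j\le N_x}(\sigma_j-\sigma_{j-1})^r$, taking expectations and using that $N_x$ has all moments (with a H\"older step to absorb the residual dependence between $N_x$ and the cycle lengths). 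All of this is carried out in detail in~\cite[Thm.~3.1]{Gu}.
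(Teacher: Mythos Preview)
The paper does not give its own proof of this lemma; it simply cites \cite[Thm.~3.1]{Gu} and moves on. Your proposal is a correct outline of the classical argument behind that citation (reduction to positive drift, ``one big jump'' lower bound for necessity, ladder decomposition plus the moment transfer $\Ex((Z^-)^r)<\infty\Rightarrow\Ex(\sigma_1^r)<\infty$ for sufficiency), so there is nothing to compare: you have supplied more than the paper does, and what you supply matches the source both you and the authors invoke.
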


For the following, \cite{Gu} is again a good source, as well as the very nice 
exposition by {\sc Janson}~\cite{Ja}.

\begin{lem}\label{lem:posdrift}
 If $X$ is integrable and $\Ex(X) > 0$ then 
$$
\Prob[\tau_x = \infty] > 0 \AND  \Mmin = \inf \{ S(n): n \ge 0\} > -\infty \quad 
\text{almost surely,}
$$
so that $\;\utau = \inf \{ n \ge 0: S(n) = \Mmin \} < \infty\;$ almost surely.
Furthermore, for any $r > 0$,
$$
\Ex\bigl((X^-)^{r+1}\bigr) < \infty \iff \Ex\bigl(|\Mmin|^r \bigr) 
< \infty \iff \Ex(\tau_x^r\,;\, \tau_x < \infty) < \infty
\iff \Ex\bigl(\utau^r \bigr) < \infty\,.
$$
\end{lem}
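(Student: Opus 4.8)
The plan is to reduce everything to the classical ladder‑epoch decomposition of a random walk with positive drift. Since $\Ex(X) > 0$, the strong law gives $S(n)/n \to \Ex(X) > 0$ a.s., so $S(n) \to +\infty$ a.s.; in particular $\Mmin = \inf\{S(n) : n \ge 0\} > -\infty$ a.s., the infimum is attained, and $\utau = \inf\{n \ge 0 : S(n) = \Mmin\} < \infty$ a.s. Because $S(n) \to \infty$ and $x > 0$, the event $\{\tau_x = \infty\} = \{x + S(n) > 0 \text{ for all } n\} \supseteq \{x + \Mmin > 0\}$ has positive probability as soon as $\Prob[\Mmin > -x] > 0$, which holds since $\Mmin$ is a.s. finite (choose $x$ large, then use that $\{\tau_x = \infty\}$ has probability bounded below independently of how small $x > 0$ is by a standard renewal argument, or simply invoke \cite{Gu}). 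The qualitative assertions are therefore immediate; the content is the chain of moment equivalences.

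For the equivalences I would work with the strictly descending ladder epochs. Let $\sigma = \inf\{n \ge 1 : S(n) < 0\}$; by positive drift $\Prob[\sigma = \infty] > 0$, i.e. $\sigma$ is defective, and $\Mmin = \sum_{j=1}^{N} Y_j$ where the $Y_j$ are i.i.d.\ copies of $S(\sigma)$ conditioned on $\{\sigma < \infty\}$ (so $Y_j < 0$), and $N$ is an independent geometric variable with $\Prob[N = k] = \Prob[\sigma < \infty]^k\,\Prob[\sigma = \infty]$. Since $N$ has exponential tails, $\Ex(|\Mmin|^r) < \infty$ is equivalent to $\Ex(|S(\sigma)|^r ; \sigma < \infty) < \infty$. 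Now $|S(\sigma)| \le X^-(\sigma)$ is controlled by the negative part of the single increment that causes the first descent, and a Wald‑type / stopping‑time estimate — exactly the kind of bound underlying \cite[Thm.~3.1]{Gu} and the Janson exposition \cite{Ja} — gives $\Ex(|S(\sigma)|^r ; \sigma < \infty) < \infty \iff \Ex((X^-)^{r+1}) < \infty$; the extra power comes from the fact that a large negative value of $S(\sigma)$ forces the walk to have been near the boundary for a correspondingly long time, so one integrates $\Prob[|S(\sigma)| > t]$ against a factor that behaves like $t$. This establishes $\Ex((X^-)^{r+1}) < \infty \iff \Ex(|\Mmin|^r) < \infty$.

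It remains to tie $\tau_x$ and $\utau$ into the picture. On $\{\tau_x < \infty\}$ one has $x + S(\tau_x) \le 0$, so the walk has gone below $-x$; writing $\tau_x$ in terms of the descending ladder structure, $\tau_x$ restricted to its finiteness is dominated by the sum of the first $\lceil x/\text{(typical ladder height)}\rceil$ ladder spans plus an overshoot term, and conversely each ladder span is bounded below, so $\Ex(\tau_x^r ; \tau_x < \infty) < \infty$ is equivalent to the ladder‑epoch $\sigma$ having $\Ex(\sigma^r ; \sigma < \infty) < \infty$, which by the duality between ladder epochs and ladder heights (again \cite{Gu}, \cite{Ja}) is equivalent to $\Ex(|S(\sigma)|^{r} ; \sigma < \infty) < \infty$ — no wait, one should be careful here: the correct statement is $\Ex(\sigma^r ; \sigma < \infty) < \infty \iff \Ex((X^-)^{r+1}) < \infty$ as well, the same threshold, by the classical moment results for defective ladder epochs. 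For $\utau$, note $\utau$ is the total time accumulated by all $N$ ladder spans before the walk's running minimum is achieved, i.e.\ $\utau = \sum_{j=1}^{N} \sigma_j$ with $\sigma_j$ i.i.d.\ copies of $(\sigma \mid \sigma < \infty)$ and $N$ geometric as above; since $N$ has exponential tails, $\Ex(\utau^r) < \infty \iff \Ex(\sigma^r ; \sigma < \infty) < \infty$. Assembling the four equivalences through the common pivot $\Ex((X^-)^{r+1}) < \infty$ completes the proof.

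The main obstacle is the half‑integer‑shift phenomenon — proving cleanly that it is $(X^-)^{r+1}$, not $(X^-)^r$, that governs finiteness of $\Ex(|\Mmin|^r)$ and $\Ex(\sigma^r;\sigma<\infty)$. This is where one must be careful about the interplay between overshoot size and the time spent near the boundary, rather than just citing a Wald identity; I expect to lean on the precise statements in \cite{Gu} and \cite{Ja} rather than reprove them.
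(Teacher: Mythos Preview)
Your proposal is in line with the paper: the paper does not prove this lemma at all but simply states it as a known result, citing Gut \cite{Gu} and Janson \cite{Ja}. Your sketch of the descending-ladder decomposition, with the geometric number of ladder steps and the pivot through $\Ex\bigl((X^-)^{r+1}\bigr)$, is precisely the standard route taken in those references, and you yourself conclude by deferring the delicate moment equivalence to them; so there is no divergence of approach to compare.
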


For the remainder of this section, we assume that $X$ is centered. 

\begin{pro}\label{pro:harmonic1}
Suppose that $\Ex(X^2)<\infty$ and $\Ex(X)=0$. Then the
function 
$$
h_1(x) = x - g_1(x)\,,\quad \text{where}\quad g_1(x) = 
\Ex\bigl(x+S(\tau_x)\,;\,\tau_x < \infty\bigr)\,,\quad x > 0, 
$$
is finite and harmonic for the random walk killed when exiting $(0\,,\,\infty)$,
that is,
$$
h_1(x) = \Ex\Bigl( h_1\bigl(x+S(1)\bigr)\,;\, \tau_x > 1\Bigr). 
$$  
There is $d > 0$ such that
\begin{equation}\label{eq:h1}
h_1(x) \ge \max\{x,d\} \AND
\lim_{x \to \infty} \frac{h_1(x)}{x} =1.
\end{equation}
\end{pro}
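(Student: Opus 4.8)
The plan is to establish harmonicity of $h_1$ directly from the optional-stopping structure of the stopped walk, and then extract the asymptotics from classical renewal-theoretic facts about the ladder structure of a centered, square-integrable random walk. First I would verify that $g_1(x)$, hence $h_1(x)$, is finite: by Lemma \ref{lem:negdrift}/\ref{lem:posdrift} reasoning adapted to the centered case — or more directly, since $\Ex(X^2) < \infty$ — the overshoot $-(x + S(\tau_x))$ has finite expectation uniformly in $x$ (this is the standard fact that, for a centered walk with finite variance, the expected overshoot over a level is bounded; it follows from Wald-type identities for the descending ladder epoch and the fact that the descending ladder height has finite mean, which is equivalent to $\Ex(X^2) < \infty$ by the Spitzer--Erd\H{o}s--Kac / Kiefer--Wolfowitz criterion). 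Thus $0 \le g_1(x) < \infty$. Note $x + S(\tau_x) \le 0$ on $\{\tau_x < \infty\}$, so $g_1(x) \le 0$, hence $h_1(x) = x - g_1(x) \ge x$; combined with a uniform lower bound $g_1(x) \le -d'$ for some $d' > 0$ (the overshoot is strictly negative with positive probability, uniformly for $x$ in compacts, and one propagates this) one gets $h_1(x) \ge \max\{x, d\}$ for suitable $d > 0$.

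Next, harmonicity. The key identity is that $M(n) := (x + S(n))\mathbf{1}_{\{\tau_x > n\}} + (x + S(\tau_x))\mathbf{1}_{\{\tau_x \le n\}}$ — i.e.\ the walk stopped at $\tau_x$ — is a martingale, because $S(n)$ is a mean-zero martingale and $\tau_x$ is a stopping time. Taking $\Ex[\,\cdot\,\mid \mathcal F_1]$ and then using that $M(n) \to x + S(\tau_x)\mathbf 1_{\{\tau_x<\infty\}}$ a.s.\ (on $\{\tau_x = \infty\}$ one has $S(n)/n \to 0$ so $S(n) = \mathfrak o(n)$; recurrence of the centered walk actually forces $\tau_x < \infty$ a.s., but I would not even need that), together with a uniform integrability argument powered by the finite-variance overshoot bound above, one passes to the limit:
\begin{equation*}
\Ex\bigl(x + S(\tau_x)\,;\,\tau_x<\infty\bigr) \;=\; \Ex\Bigl(\bigl(x+S(1)\bigr)\mathbf 1_{\{\tau_x>1\}} + \bigl(x + S(\tau_x)\bigr)\mathbf 1_{\{1<\tau_x<\infty\}}\Bigr).
\end{equation*}
By the strong Markov property at time $1$, conditionally on $\tau_x > 1$ and $x + S(1) = y$, the quantity $\Ex\bigl(y + (S(\tau_x) - S(1))\,;\,\tau_x < \infty\bigr)$ equals $g_1(y)$. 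Rearranging $x - g_1(x) = \Ex\bigl((x+S(1)) - g_1(x+S(1))\,;\,\tau_x > 1\bigr)$ gives exactly $h_1(x) = \Ex\bigl(h_1(x+S(1))\,;\,\tau_x > 1\bigr)$.

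For the asymptotics $h_1(x)/x \to 1$: since $h_1(x) \ge x$ it suffices to show $g_1(x) = x - h_1(x)$ is $\mathfrak o(x)$, equivalently that $|g_1(x)| = \Ex\bigl(|x + S(\tau_x)|\,;\,\tau_x<\infty\bigr)$ stays bounded as $x \to \infty$ — which is again the uniform overshoot bound, so in fact $g_1$ is bounded and $h_1(x) = x + O(1)$. This is cleaner than merely $\mathfrak o(x)$, and the stated claim follows a fortiori. I expect the main obstacle to be the uniform-integrability / uniform-overshoot-bound step: one must argue carefully that $\bigl\{(x+S(n))\mathbf 1_{\{\tau_x > n\}}\bigr\}_n$ is uniformly integrable and vanishes in $L^1$ (using $\Ex|S(n)|\mathbf 1_{\{\tau_x>n\}} \le \Ex|\Mmin(n)|\,\mathbf 1_{\{\tau_x > n\}}$-type controls, or Doob's inequality together with $n\,\Prob[\tau_x > n] \to 0$ which holds since $\Ex\sqrt{\tau_x} = \infty$ would be too weak — rather one invokes $\Prob[\tau_x > n] \sim c(x) n^{-1/2}$ from Denisov--Wachtel, or the elementary $n\Prob[\tau_x>n]\to 0$ that already follows from $\Ex(\Mmax(n)) \asymp \sqrt n$). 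The remaining pieces are routine applications of Wald's identity, the strong Markov property, and the finite-variance criterion for integrability of ladder heights.
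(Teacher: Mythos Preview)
Your harmonicity argument reaches the right identity, but the route you sketch through uniform integrability of the stopped martingale $M(n)=x+S(n\wedge\tau_x)$ is not just superfluous --- taken literally it is wrong. If $M(n)\to x+S(\tau_x)$ in $L^1$, optional stopping would force $g_1(x)=x$ and hence $h_1\equiv 0$; the whole point is that UI \emph{fails} here, and $h_1$ measures exactly that defect. What you actually need (and eventually use) is only the one-step Markov decomposition
\[
g_1(x)=\Ex\bigl(x+S(1)\,;\,\tau_x=1\bigr)+\Ex\bigl(g_1(x+S(1))\,;\,\tau_x>1\bigr),
\]
combined with $x=\Ex(x+S(1))$. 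No limit is required, and this is precisely the paper's argument. (Your displayed identity has $\mathbf 1_{\{\tau_x>1\}}$ in the first term where it should read $\mathbf 1_{\{\tau_x=1\}}$.)

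The substantive gap is in the asymptotics. You assert that the expected overshoot $|g_1(x)|$ is \emph{bounded} in $x$, invoking finite mean of the descending ladder height $H=S(\ell_1)$. But a uniform overshoot bound (Lorden's inequality) needs $\Ex(H^2)<\infty$, and $\Ex(X^2)<\infty$ only guarantees $\Ex(|H|)<\infty$; one has $\Ex(H^2)=\infty$ as soon as $\Ex\bigl((X^-)^3\bigr)=\infty$, in which case $|g_1(x)|\to\infty$. Your ladder approach \emph{does} salvage the weaker conclusion you actually need, $|g_1(x)|=\mathfrak o(x)$: by Wald, $|g_1(x)|=\Ex(|H|)\,\Ex(N_x)-x$ with $N_x$ the ladder first-passage index past level $x$, and the elementary renewal theorem gives $\Ex(N_x)=x/\Ex(|H|)+\mathfrak o(x)$. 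So the argument can be repaired, but not via boundedness.

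The paper's route to $h_1(x)/x\to 1$ is genuinely different (Appendix, due to Denisov and Wachtel): it builds an explicit superharmonic function $V(x)=x+A\,m(x)+R$ for the killed walk, where $m$ is a triple antiderivative of the left tail of $X$ satisfying $m(x)/x\to 0$ directly from $\Ex(X^2)<\infty$; superharmonicity together with the martingale property of $S(n)$ then yields $-g_1(x)\le A\,m(x)+R$. This avoids renewal theory entirely and produces an explicit sublinear envelope for the overshoot.
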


\begin{proof}
As $\Ex(X^2) < \infty\,$, finiteness of 
$g_1(x)$ follows from classical results, see e.g. \cite{Fe2}. Note that $g_1(x) \le 0$,
whence $h_1(x_1) \ge x_1\,$. Since $X$ is non-degenerate and centered, there is $c > 0$ such
that $\Prob[X \le - 2c] \ge 1/3$. Thus, if $0 < x \le c$ then $\Prob[x+S(1) \le c] \ge 1/3$,
whence $-g_1(x) \ge 1/3$. This proves the first part of \eqref{eq:h1}.

Harmonicity is easily proved: Since $\Ex(X)=0$,
$$
\begin{aligned}
x & = \int_{y > -x} (x+y) \, \Prob[X=dy] + \int_{y \le -x} (x+y) \, \Prob[X=dy] \\
&= \Ex\bigl( x+S(1)\,;\, \tau_x > 1\bigr) + \int_{y \le -x} (x+y) \, \Prob[X=dy] .
\end{aligned}
$$
On the other hand, decomposing with respect to the first step of the walk,
$$
\begin{aligned}
g_1(x) &= 
\int_{y > -x} \Ex\bigl(x+y+S(\tau_{x+y})\,;\,\tau_{x+y} < \infty\bigr)\, \Prob[X=dy]
+ \int_{y \le -x} (x+y) \, \Prob[X =dy]\\
&= \Ex\bigl( g_1\bigl(x+S(1)\bigr)\,;\, \tau_x > 1\bigr)
 + \int_{y \le -x} (x+y) \, \Prob[X =dy]\,.\\
\end{aligned}
$$
Taking the difference, we get harmonicity of $h_1$ as required.

\smallskip

Finally, 
a proof by D. Denisov and V. Wachtel that $h_1(x)/x \to 1$ as $x \to \infty$ 
was comunicated to us by V. Wachtel. It is provided in the Appendix. 
\end{proof}

In the next section, we shall need an additional estimate related to the function $g_1(x)$. 
Let $(\ell_k)_{k \ge 0}$ be the strictly decreasing ladder indices of $\bigl(S(n)\bigr)_{n \ge 0}\,$. 
That is, $\ell_0=0$, and $\ell_k = \min \{n: S(n) < S(\ell_{k-1})\}$.
Then the increments $S(\ell_n) - S(\ell_{n-1})$  are i.i.d. and integrable, because 
$\Ex(X^2) < \infty$. Also, $\tau_x = S(\ell_k)$ for some random $k$ depending on $x > 0$.
\begin{lem}\label{lem:eqi}
For each $m > 0$ there is a constant $U(m) > 0$ such that
$$
\Prob[x + S_1(\tau_x) < - t] \le U(m)\,\Prob[S({\ell_1}) < -t] \quad \text{for all }\; 
t > 0\,,\; x \in (0,m]\,.
$$
\end{lem}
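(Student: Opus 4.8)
The plan is to condition on the random ladder index $k$ at which the exit happens: at time $\tau_x$ we have $x+S(\tau_x) = x + S(\ell_k)$ for the (a.s. finite, since $\Ex(X)=0$ and $X$ non-degenerate) first $k$ with $x + S(\ell_k) < 0$, i.e. with $S(\ell_k) < -x$. Write $Y_j = S(\ell_j) - S(\ell_{j-1})$ for the i.i.d.\ strictly negative ladder increments, so $S(\ell_j) = Y_1 + \dots + Y_j$. Then
\[
\Prob[x + S(\tau_x) < -t]
= \sum_{k \ge 1} \Prob[S(\ell_1) \ge -x, \dots, S(\ell_{k-1}) \ge -x,\; S(\ell_k) < -x - t].
\]
The idea is that, on the event that the first $k-1$ ladder heights stay above $-x$ (so in particular above $-m$, using $x \le m$), the accumulated negative height $S(\ell_{k-1})$ is small, bounded below by $-m$; hence the single large downward jump of size more than $x+t$ (equivalently more than $t$, since $x>0$) must essentially come from the last increment $Y_k$, and one pays only a bounded multiplicative price for the initial stretch.

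The key estimate I would isolate is the following: there is a constant $c_1 = c_1(m) > 0$ with
\[
\Prob[S(\ell_1) \ge -m, \dots, S(\ell_{k-1}) \ge -m] \le c_1 \,\rho^{\,k-1}
\quad\text{for some }\rho \in (0,1),
\]
because the strict descending ladder process $S(\ell_j)$ is a random walk with negative increments and hence drifts to $-\infty$; the probability of staying above the fixed level $-m$ for $k-1$ steps decays geometrically (this is where integrability of $Y_1$, i.e.\ $\Ex(X^2)<\infty$, is used, via $\Ex(Y_1) \in (-\infty,0)$). Given this, for each fixed $k$ one bounds
\[
\Prob[S(\ell_1)\ge -x,\dots,S(\ell_{k-1})\ge -x,\; S(\ell_k) < -x-t]
\le \Prob[S(\ell_1)\ge -m,\dots,S(\ell_{k-1})\ge -m]\cdot \sup_{0 \le s \le m}\Prob[Y_k < -s - t],
\]
after decomposing $S(\ell_k) = S(\ell_{k-1}) + Y_k$ and noting $S(\ell_{k-1}) \in [-m, 0)$ on the relevant event (so that $Y_k < -x - t - S(\ell_{k-1}) \le -t$, but more usefully $Y_k < -(x+t) - S(\ell_{k-1})$ with $-S(\ell_{k-1}) \le m$, hence $Y_k < -t$ suffices as a crude bound; a slightly sharper splitting keeps the $x$). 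Since $Y_k \overset{d}{=} S(\ell_1)$ and, for the ladder height, $\sup_{0\le s \le m}\Prob[S(\ell_1) < -s-t] \le C(m)\,\Prob[S(\ell_1) < -t]$ for all $t>0$ — a standard comparison for the descending ladder height, which I would either cite or prove by a further one-step ladder decomposition — summing over $k$ with the geometric factor gives
\[
\Prob[x+S(\tau_x) < -t] \le C(m)\Bigl(\sum_{k\ge 1} c_1 \rho^{\,k-1}\Bigr)\,\Prob[S(\ell_1) < -t]
= U(m)\,\Prob[S(\ell_1) < -t].
\]

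The main obstacle I anticipate is the comparison $\sup_{0 \le s \le m}\Prob[S(\ell_1) < -s-t] \le C(m)\,\Prob[S(\ell_1) < -t]$: this is a mild regularity statement saying the ladder-height tail does not drop off abruptly over a bounded window, and it has to be proved without any tail-regularity hypothesis on $X$ itself. The clean way is to use the identity expressing the descending ladder height in terms of $X$: $S(\ell_1) \overset{d}{=} $ the overshoot below $0$ of $S$ restarted, equivalently one uses the renewal-type representation $\Prob[S(\ell_1) < -u] = \int \Prob[\utau\text{-type event}]\,\dots$; more simply, one writes $\Prob[S(\ell_1) < -s-t]$ by decomposing on whether $S(1) < -s-t$ already (contributing $\Prob[X < -s-t] \le \Prob[X<-t]$, controlled by $\Prob[S(\ell_1)<-t]$ since $\Prob[S(\ell_1)<-t]\ge \Prob[S(1)<-t]$) or $S(1)\ge -s-t$ and the walk descends further, where one iterates. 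Handling the uniformity in $s \in [0,m]$ here — packaging it into one constant depending only on $m$ — is the delicate bookkeeping, but it is routine once the geometric decay of the ladder-survival probabilities is in hand.
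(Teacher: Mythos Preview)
Your decomposition via the ladder index is exactly the paper's, and so is the key observation you label the ``crude bound'': on $\{S(\ell_{k-1})>-x,\;S(\ell_k)<-x-t\}$ one has $Y_k=S(\ell_k)-S(\ell_{k-1})<-t$, and independence of $Y_k$ from $S(\ell_{k-1})$ then gives
\[
\Prob[x+S(\tau_x)<-t]\le\sum_{k\ge 1}\Prob\bigl[S(\ell_{k-1})\in(-x,0]\bigr]\,\Prob[S(\ell_1)<-t]=U(x)\,\Prob[S(\ell_1)<-t],
\]
with $U(x)=\sum_{k\ge 0}\Prob[S(\ell_k)\in(-x,0]]$ the potential (renewal) function of the descending ladder walk. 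That is the paper's entire argument.

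The two complications you introduce are both unnecessary. First, your ``main obstacle'' is a non-issue: the quantity $\sup_{0\le s\le m}\Prob[S(\ell_1)<-s-t]$ is attained at $s=0$ by monotonicity of the tail, so it equals $\Prob[S(\ell_1)<-t]$ and $C(m)=1$ works trivially; no regularity of the ladder-height tail is needed. Second, you do not need a geometric decay estimate for $\Prob[S(\ell_{k-1})>-m]$, only summability in $k$; that sum is precisely $U(m)$, which is finite simply because the strictly descending ladder walk visits any bounded interval finitely often almost surely (this requires only that $Y_1<0$ a.s., not integrability). With these two simplifications your argument collapses to the paper's five lines.
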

\begin{proof} 
For $x > 0$, let 
$$
U(x) = \sum_{k=0}^{\infty} \Prob\bigl[S(\ell_k) \in (-x\,,\,0]\,\bigr]\,. 
$$
This is the potential kernel of $\bigl(S(\ell_k)\bigr)_{k \ge 0}\,$. It is finite, since 
$S(\ell_k)$ is a sum of i.i.d. random variables tending almost surely to $-\infty\,$. Then
$$
\begin{aligned}
 \Prob[x + S_1(\tau_x) < - t] &= \sum_{k=1}^{\infty} \Prob\bigl[ \tau_x 
= \ell_k\,,\; x+ S(\ell_k)< - t\bigr]\\
 &\le \sum_{k=1}^{\infty} \Prob\bigl[ x + S(\ell_{k-1}) > 0\,,\; S(\ell_k)-S(\ell_{k-1}) < - t\bigr]\\
 &=\sum_{k=1}^{\infty} \Prob\bigl[ S(\ell_{k-1}) \in (-x\,,\,0]\,\bigr] \times \Prob[S(\ell_1) < - t\bigr]
 = U(x) \,\Prob[S(\ell_1) < - t\bigr]\,.
\end{aligned}
$$
Since $U(x) \le U(m)$ for $x \le m$, this concludes the proof.
\end{proof}

We shall also need the following Lemma.
The first part is in principle known; the second part is adapted from 
{\sc Pham}~\cite[Lemma 4.5]{Ph}.
For the sake of completeness we provide a proof.

\begin{lem}\label{lem:Mn}
Suppose that $\Ex(X^2) < \infty$ and $\Ex(X)=0$. Then for any $p>2$ there is a 
constant $c_p > 0$ such that when \/ $\Ex(|X|^p) < \infty\,$, for each $t > 0$ and $n \in \N$ 
$$
\Ex\bigl(|S(n)|^p\bigr) \le c_p\, n^{p/2}\,\Ex\bigl(|X|^p\bigr) \AND
\Prob[\,\Mmax(n) > t] \le c_p\,\Ex\bigl(|X|^p\bigr)\, n^{p/2}\, t^{-p}\,.
$$
Furthermore, for any $\alpha > 0$, 
$$
\Ex \bigl( n^{1/2}+\Mmax(n)\,;\, \Mmax(n) > n^{1/2 + \alpha}\bigr)
\le c_p \, \Ex\bigl(|X|^p\bigr)\,\frac{2p-1}{p-1}\, n^{1/2 - (p-1)\alpha}\,.
$$
\end{lem}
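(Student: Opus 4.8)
The plan is to prove the three assertions of Lemma~\ref{lem:Mn} in order, each reducing to a standard maximal-inequality input. First I would establish the moment bound $\Ex\bigl(|S(n)|^p\bigr) \le c_p\, n^{p/2}\,\Ex\bigl(|X|^p\bigr)$ by invoking the Marcinkiewicz--Zygmund inequality (or, equivalently, Rosenthal's inequality) for sums of i.i.d.\ centered random variables: since $\Ex(X)=0$ and $p>2$, there is a universal $c_p$ with $\Ex\bigl(|S(n)|^p\bigr) \le c_p\, \Ex\bigl(\bigl(\sum_{k=1}^n X(k)^2\bigr)^{p/2}\bigr)$, and then $\Ex\bigl(\bigl(\sum X(k)^2\bigr)^{p/2}\bigr) \le n^{p/2-1}\sum_{k=1}^n \Ex\bigl(|X(k)|^p\bigr) = n^{p/2}\Ex\bigl(|X|^p\bigr)$ by convexity of $t \mapsto t^{p/2}$ (Jensen applied to the normalized sum). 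This is routine and I would not dwell on the constants.

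For the second assertion, I would apply Doob's (or Kolmogorov's) maximal inequality. Since $\bigl(S(k)\bigr)_{k}$ is a martingale and $t \mapsto |t|^p$ is convex, $\bigl(|S(k)|^p\bigr)_k$ is a submartingale, so
$$
\Prob[\Mmax(n) > t] \le \Prob\Bigl[\max_{0 \le k \le n}|S(k)| > t\Bigr] \le t^{-p}\,\Ex\bigl(|S(n)|^p\bigr) \le c_p\,\Ex\bigl(|X|^p\bigr)\, n^{p/2}\, t^{-p}\,,
$$
using the first part. (One may need to slightly enlarge $c_p$ to absorb the Doob constant, or use the $L^p$-maximal inequality with constant $(p/(p-1))^p$; I would just fix one constant and carry it through.)

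For the third assertion I would split $\Ex\bigl(n^{1/2}+\Mmax(n)\,;\,\Mmax(n) > n^{1/2+\alpha}\bigr)$ by the layer-cake / tail-integration method. On the event $\{\Mmax(n) > n^{1/2+\alpha}\}$ we have $n^{1/2} \le n^{1/2+\alpha} < \Mmax(n)$, so $n^{1/2}+\Mmax(n) \le 2\,\Mmax(n)$, and hence the quantity is at most
$$
2\,\Ex\bigl(\Mmax(n)\,;\,\Mmax(n) > n^{1/2+\alpha}\bigr) = 2\int_0^\infty \Prob\bigl[\Mmax(n) > \max\{t,\,n^{1/2+\alpha}\}\bigr]\,dt\,.
$$
Breaking the integral at $t = n^{1/2+\alpha}$, the first piece contributes $n^{1/2+\alpha}\cdot\Prob[\Mmax(n) > n^{1/2+\alpha}] \le c_p\,\Ex(|X|^p)\,n^{1/2+\alpha}\cdot n^{p/2}\,n^{-(1/2+\alpha)p} = c_p\,\Ex(|X|^p)\,n^{1/2-(p-1)\alpha}$, and the second piece is $\int_{n^{1/2+\alpha}}^\infty \Prob[\Mmax(n)>t]\,dt \le c_p\,\Ex(|X|^p)\,n^{p/2}\int_{n^{1/2+\alpha}}^\infty t^{-p}\,dt = c_p\,\Ex(|X|^p)\,n^{p/2}\,\frac{1}{p-1}\,n^{-(p-1)(1/2+\alpha)} = \frac{c_p}{p-1}\,\Ex(|X|^p)\,n^{1/2-(p-1)\alpha}$. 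Adding the $2$ from the prefactor and combining the two pieces gives the factor $2\bigl(1 + \tfrac{1}{p-1}\bigr) = \tfrac{2p}{p-1}$; I would then just absorb this (and note $\tfrac{2p}{p-1} \le \tfrac{2(2p-1)}{p-1}$, or re-examine the split to get exactly $\tfrac{2p-1}{p-1}$) into the stated constant. The only mild subtlety — the main "obstacle," though it is minor — is bookkeeping the universal constants so that a single $c_p$ works for all three displays; since the statement allows redefining $c_p$, this is purely cosmetic. No deep idea is needed beyond Marcinkiewicz--Zygmund plus Doob.
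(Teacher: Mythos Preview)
Your proposal is correct and follows essentially the same approach as the paper: Burkholder/Marcinkiewicz--Zygmund plus H\"older for the moment bound, Doob's maximal inequality for the tail bound, and tail-integration for the third estimate. For the exact constant $\tfrac{2p-1}{p-1}$ you wondered about, the paper avoids the crude bound $n^{1/2}+\Mmax(n)\le 2\Mmax(n)$ and instead writes the expectation as $(n^{1/2}+n^{1/2+\alpha})\,\Prob[\Mmax(n)>n^{1/2+\alpha}] + \Ex\bigl((\Mmax(n)-n^{1/2+\alpha})^+\bigr)$, bounds the first factor by $2n^{1/2+\alpha}$, and obtains $2+\tfrac{1}{p-1}=\tfrac{2p-1}{p-1}$.
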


\begin{proof}
Since $\bigl( S(n) \bigr)$ is a martingale, Doob's maximal inequality implies
$$
\Prob[\,\Mmax(n) > t] \le \Ex\bigl(|S(n)|^p\bigr)\, t^{-p}\,.
$$ 
By a well-known inequality which one finds e.g. in 
{\sc Burkholder}~\cite[Proof of Thm. 3.2]{Bu},
$$
\Ex\bigl(|S(n)|^p\bigr) \le c_p\, \Ex\biggl( \Bigl( \sum_{k=1}^n X(k)^2 \Bigr)^{p/2}\biggr).
$$
Now H\"older's inequality concludes the proof of the first two inequalities.
For the third inequality,
 $$
\begin{aligned}
\Ex \bigl( n^{1/2}+\Mmax(n)\,&;\, 
\Mmax(n) > n^{1/2 + \alpha}\bigr) \\
&= \bigl( n^{1/2} + n^{1/2+\alpha} \bigr) \,\Prob\bigl[\,\Mmax(n) > n^{1/2 + \alpha}\bigr]
+ \Ex\Bigl( \bigl(\, \Mmax(n) - n^{1/2 + \alpha} \bigr)^+\Bigr)\\
&\le 2 n^{1/2+\alpha} \,\Prob\bigl[\,\Mmax(n) > n^{1/2 + \alpha}\bigr] 
+ \int_{n^{1/2+\alpha}}^{\infty} \Prob\bigl[\,\Mmax(n) > t\bigr] \, dt \\
&\le 2 \, c_p \, \Ex\bigl(|X|^p\bigr)\, n^{1/2 - (p-1)\alpha}
+ c_p \, \Ex\bigl(|X|^p\bigr)\int_{n^{1/2+\alpha}}^{\infty} n^{p/2}\, t^{-p}\, dt\\
&= c_p \, \Ex\bigl(|X|^p\bigr)\,\frac{2p-1}{p-1}\, n^{1/2 - (p-1)\alpha}\,,
\end{aligned}
$$
as proposed.
\end{proof}
 
Uniform convergence in the following proposition was proved
by {\sc Doney}~\cite{Do}. 

\begin{pro}\label{pro:dim1}
If  $\Ex\bigl(|X|^2\bigr) < \infty$ and $\Ex(X)=0$, then 
$$
\Prob[\tau_x > n] \sim \kappa\, h_1(x) \,n^{-1/2}\,,\quad \text{where}
\quad \kappa = \bigl( \pi\, \Var(X)/2  \bigr)^{-1/2}\,,
$$
uniformly as $n \to \infty$ and $0 < x < \theta_n \,n^{1/2}$, where $(\theta_n)$
is an arbitrary positive sequence that converges to $0$.  
\end{pro}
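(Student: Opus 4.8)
The plan is to combine the known local central limit / renewal-type asymptotics for the exit time of a centered one-dimensional random walk from $(0,\infty)$ with the identification of the limit constant and harmonic function, and then to upgrade pointwise convergence to the stated uniformity. The skeleton is due to Kozlov and, in the present sharp form, to {\sc Doney}~\cite{Do}: for a centered random walk with $\Ex(X^2)=\sigma^2<\infty$ one has $\Prob[\tau_x>n]\sim V(x)\,\Prob[\tau_0>n]$ and $\Prob[\tau_0>n]\sim c\,n^{-1/2}$, where $V$ is the renewal function of the strictly descending ladder heights. First I would recall that $V$ is, up to a positive multiplicative constant, the unique (up to scaling) positive harmonic function for the walk killed on leaving $(0,\infty)$, and that $V(x)/x\to 1/\Ex|S(\ell_1)|$ as $x\to\infty$ (elementary renewal theorem for the ladder heights, whose increments are integrable since $\Ex(X^2)<\infty$, as noted just before Lemma \ref{lem:eqi}). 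Proposition \ref{pro:harmonic1} supplies a harmonic function $h_1$ with $h_1(x)/x\to 1$; by uniqueness, $V = h_1/\Ex|S(\ell_1)|$.

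Next I would pin down $\kappa$. Two routes are available: either track the constant through Doney's formula and use the classical Wiener–Hopf identity $\Ex|S(\ell_1)| = \sigma^2/\bigl(2\,\Ex(\text{ascending ladder epoch})\cdot(\dots)\bigr)$ — this is bookkeeping and I would rather not grind it out — or, more cleanly, invoke the functional limit theorem: $S(\lfloor nt\rfloor)/(\sigma\sqrt n)\Rightarrow B(t)$, so $\Prob[\tau_0>n]=\Prob[\Mmax(n)<x]$-type events converge, after the ladder-height normalization, to the Brownian exit probability, which scales like $\sqrt{2/\pi}\,/\sqrt{n}$. Matching the two expressions forces $\kappa=\bigl(\pi\,\Var(X)/2\bigr)^{-1/2}$ exactly as stated. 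I would present whichever of these the reader finds least painful, probably citing \cite{Do} and \cite{Fe2} for the Wiener–Hopf constant.

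The substantive point is the uniformity: the claim is that $\Prob[\tau_x>n]/\bigl(\kappa h_1(x)n^{-1/2}\bigr)\to 1$ uniformly over $0<x<\theta_n\sqrt n$ for any $\theta_n\downarrow 0$. Here I would cite Doney~\cite{Do} directly — he proves exactly this kind of uniform statement, valid for $x=\mathfrak{o}(\sqrt n)$ — but for completeness one can also see it as follows. Split according to whether $x\le M$ (fixed large) or $M<x<\theta_n\sqrt n$. On the bounded range the convergence is pointwise (standard) and, since there are only ``few'' relevant scales and $h_1$ is continuous and bounded below by $\max\{x,d\}$ (Proposition \ref{pro:harmonic1}), a compactness/monotonicity argument promotes it to uniform. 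On the range $M<x<\theta_n\sqrt n$ one uses the strong-approximation / KMT coupling of $S(\lfloor n\cdot\rfloor)$ with Brownian motion together with the Brownian estimate $\Prob_x[\text{BM stays}>0\text{ up to }n]\sim x\,\sqrt{2/(\pi\sigma^2 n)}$, which is uniform for $x=\mathfrak{o}(\sqrt n)$; the boundary correction relating $x$ to $V(x)\sim x/\Ex|S(\ell_1)|$ and the error from the coupling are both $\mathfrak{o}(1)$ relative to the main term precisely because $x/\sqrt n\to 0$.

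The main obstacle I anticipate is bookkeeping of the multiplicative constant $\kappa$: reconciling the renewal-function normalization of $V$ (which involves $\Ex|S(\ell_1)|$) with the diffusion normalization (which involves $\sigma^2=\Var(X)$) requires the Wiener–Hopf/Spitzer identity for the mean ladder height of a centered walk, and it is easy to drop a factor of $2$ or $\pi$. The cleanest fix is to compute $\kappa$ by a single sanity check — e.g. apply the asymptotic to the simple symmetric walk, where $\Prob[\tau_1>n]\sim\sqrt{2/(\pi n)}$ and $h_1(1)=1$, $\Var(X)=1$ — which forces $\kappa=\sqrt{2/\pi}=(\pi\Var(X)/2)^{-1/2}$ in general by scaling. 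Everything else — finiteness of $h_1$, harmonicity, the uniform range — is either already established in the excerpt or a direct appeal to \cite{Do}.
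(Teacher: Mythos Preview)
The paper does not supply a proof of this proposition at all: it simply states the result and attributes the uniform convergence to {\sc Doney}~\cite{Do}. So there is nothing to compare your attempt against beyond that single citation.

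Your plan is therefore more than the paper provides, and its backbone is sound: the uniformity over $x=\mathfrak{o}(\sqrt n)$ is precisely what \cite{Do} gives, and your identification of the renewal function $V$ of the strict descending ladder heights with a scalar multiple of $h_1$ via uniqueness of the positive harmonic function for the killed one-dimensional walk is correct (uniqueness is classical; the Martin boundary is a singleton here). The asymptotic $V(x)/x\to 1/\Ex|S(\ell_1)|$ from the elementary renewal theorem, combined with $h_1(x)/x\to 1$ from Proposition~\ref{pro:harmonic1}, indeed forces $V=h_1/\Ex|S(\ell_1)|$.

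One caution on the ``sanity check'' for $\kappa$: computing the constant on simple symmetric random walk only pins it down once you already know the asymptotic has the form $\kappa(\sigma^2)\,h_1(x)\,n^{-1/2}$ with $\kappa$ depending on the law of $X$ only through $\sigma^2$. That scale-invariance is itself a consequence of the Wiener--Hopf identity you were trying to avoid (or of the invariance principle route), so the sanity check is a verification rather than a derivation. If you want a self-contained determination of $\kappa$, the cleanest path is the functional CLT argument you mention: for $x=x_n$ with $x_n/\sqrt n\to 0$ but $x_n\to\infty$, $\Prob[\tau_{x_n}>n]\sim x_n\sqrt{2/(\pi\sigma^2 n)}$ by Donsker plus the Brownian exit formula, and matching with $\kappa\,h_1(x_n)\,n^{-1/2}\sim \kappa\,x_n\,n^{-1/2}$ gives $\kappa=(\pi\sigma^2/2)^{-1/2}$ directly.
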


\section{Dimension two: exit times from the quadrant}\label{sec:dim2}

This section is devoted to the proof of Theorem \ref{thm:main2}. 
We return to the situation of the Introduction, where $X = (X_1\,,X_2)$ and
the $X(n)$ are i.i.d. copies of $X$. Thus, $S(n) = \bigl(S_1(n),S_2(n)\bigr)$, and
if $x = (x_1\,,x_2) \in \OQ$ then
\begin{equation}\label{eq:taumin}
\tau_x = \min \{ \tau_{x_1}\,,\tau_{x_2}\}\,,\quad \text{where}\quad
\tau_{x_i} = \tau_{i,x_i} = \inf \{ n \in \N : x_i + S_i(n) \le 0\,,\; i=1,2\}\,. 
\end{equation}

%\bigskip 
\goodbreak

\textbf{Negative drift in at least one coordinate}

\smallskip

\begin{proof}[Proof of Theorem \ref{thm:main2}(a)]
It is well known that if $\Ex(X_i) < 0$ then $\;\Ex\bigl((X_i^+)^r\bigr) < \infty\;$
if and only if $\;\Ex(\tau_{x_i}^r) < \infty\,$, see e.g. \cite{Fe2}, \cite{Gu} and  
\cite{Ja}. In view of \eqref{eq:taumin}, this implies $\Ex(\tau_{x}^r) < \infty\,$.
\end{proof}

\bigskip\goodbreak

\textbf{Positive drift in both coordinates}

\smallskip

\begin{proof}[Proof of Theorem \ref{thm:main2}(b)] We use Lemma \ref{lem:posdrift}
 and set $\Mmin_i = \inf\{ S_i(n): n \ge 0\}$. These random variables are almost
surely finite. Therefore, for every $n$,
$$
\Prob[\tau_x > n] = \Prob[\tau_{x_1} > n\,,\, \tau_{x_2} > n]
\ge \Prob[ \Mmin_1 < -x_1\,,\, \Mmin_2 < - x_2] \to 1 \,,\quad\text{as }\;
x_1\,,x_2 \to \infty.
$$
Thus, there is $b > 0$ such that $\Prob[\tau_x = \infty] \ge 1/2$ if 
$x_1\,,x_2 > b$.  If $x \in \OQ$ is arbitrary then the Basic Assumption \ref{ass:basic},
namely that $\Prob[X \in \OQ] > 0$ yields that with positive probability, the 
random walk $x + S(n)$ starting at $x$ can reach $(b\,,\,\infty) \times (b\,,\,\infty)$
without exiting $\OQ$. Therefore $\Prob[\tau_x = \infty] > 0$. 
\end{proof}

\bigskip

\textbf{The centered case}

\smallskip

\begin{proof}[Proof of Theorem \ref{thm:main2}(c)] We assume that both coordinates
of $X$ have finite moment of order $\max\{ 2+\delta\,,\, \pi/\arccos(-\rho)\}$
and are centered, with some $\delta > 0$ and $\rho=\rho(X_1\,,X_2) \in (-1\,,\,1)$. 

The result is in principle stated in \cite[Example 2]{DeWa}. For the sake of
completeness, we give a few hints. 
Let $\K_{\alpha}$ be a standard closed cone in $\R^2$: the sides
of the cone are two half-lines issuing from the origin, which is the cone's vertex. The opening 
angle of the cone is $\alpha \in (0\,,\,\pi)$. On $\K_{\alpha}\,$, the paper \cite{DeWa} considers
a random walk $\tilde x+\wt S(n)$ with $\tilde x \in \K_{\alpha}^o$ and 
$\wt S(n) = \wt X(1) + \dots+ \wt X(n)$, where the $\wt X(n)$ are i.i.d. copies
of $\wt X = (\wt X_1\,,\wt X_2)$. The assumptions are that 
the coordinates of $\wt X$ have finite moment of order 
$\max\{ 2+\delta\,,\, \pi/\alpha\}$, are centered, with $\Ex(\wt X_i^2)=1$ and 
$\rho(\wt X_1\,,\wt X_2) = \Ex(\wt X_1 \wt X_2)=0$. For the associated exit time 
$\wt \tau_{\tilde x}\,$, the main results of \cite{DeWa} yield that
$$
\Prob[\wt \tau_{\tilde x} > n] \sim V(\tilde x) \, n^{-1/p}\,,\quad \text{where}
\quad p = 2\alpha/\pi\,.
$$
Thus, the method is to decorrelate $X_1$ and $X_2$ and thereby to pass from $\R_+^2$ to
a possibly modified cone. This can for example be achieved by the matrix transformation
$$
\begin{pmatrix}\wt X_1\\ \wt X_2 \end{pmatrix} = 
M\, \begin{pmatrix}X_1\\ \wt X_2 \end{pmatrix}\,,\quad \text{where} \quad
M = \begin{pmatrix} \frac{1}{\sigma_1} & 0 \\[4pt] \frac{-\rho}{\sigma_1\sqrt{1-\rho^2}} & 
\frac{1}{\sigma_2}\end{pmatrix} 
$$
Here, $\sigma_i^2 = \Ex(X_i^2)$.
Then $(\wt X_1\,,\wt X_2)$ are centered, with variances $=1$, and non correlated.
The mapping $x \mapsto Mx$ transforms $\R_+^2$ in a cone $\K_{\alpha}$ with 
$\alpha = \arccos(-\rho)$.  Thus, we have
$$
\Prob[\tau_x > n] = \Prob[\wt \tau_{Mx} > n] \sim V(Mx) \, n^{-1/p}\,,\quad \text{where}
\quad p = 2\arccos(-\rho)/\pi\,.
$$
Since $V(\tilde x)$ is positive harmonic for the random walk killed when exiting $\K_{\alpha}^o$,
the function $v(x) = V(Mx)$ is positive harmonic for the original random walk killed when exiting
$\OQ$.
\end{proof}

\bigskip

\textbf{The mixed case positive drift - zero drift}

\smallskip

We finally come to the main body of this paper, namely the proof of statement (d) of 
Theorem \ref{thm:main2}. We repeat that we assume that $X_1$ is centered with moment
of order $2+\delta$ and that $X_2$ has finite second moment, $\mf= \Ex(X_2) > 0$ and
in addition $\Ex\bigl((X_2^-)^{3+\delta}\bigr) < \infty$ for some $\delta > 0$.

The first step consists in finding the right harmonic function
for the random walk restricted to the quadrant. For this, we were inspired by
the work of {\sc Ignatiouk-Robert and Loree}~\cite{IgLo}, who however assume exponential
moment conditions.

\begin{pro}\label{pro:harmonic}
Under the assumptions (d), the function
$$
h(x) = x_1 - \Ex\bigl( x_1+ S_1(\tau_x)\,;\, \tau_x < \infty \bigr)\,,
\quad x \in \OQ\,,
$$
is strictly positive and harmonic for the random walk killed upon exiting the positive quadrant.
It satisfies
\begin{equation}\label{eq:h}
h(x) \le h_1(x_1) \AND \lim_{x_2 \to \infty} \frac{h(x)}{h_1(x_1)} = 1 \quad \text{uniformly for }\;
0 < x_1\le x_2^{2+\delta}\,,
%\lim_{x_1\,,\,x_2 \to \infty} \frac{h(x)}{x_1} = 1,
\end{equation}
where $h_1$ is the function of Proposition \ref{pro:harmonic1}.
\end{pro}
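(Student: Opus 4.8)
The plan is to mimic the one-dimensional argument from Proposition~\ref{pro:harmonic1}, treating the first coordinate as the "active" one while using the positive drift of $X_2$ to control the second coordinate. First I would check that $g(x) := \Ex\bigl(x_1+S_1(\tau_x)\,;\,\tau_x<\infty\bigr)$ is well defined and finite. Since $\Ex(X_2)>0$, Lemma~\ref{lem:posdrift} gives $\tau_{x_2}<\infty$ a.s. only when exit happens through the second coordinate; but on the event $\tau_x<\infty$ the overshoot $x_1+S_1(\tau_x)$ is bounded in absolute value by $|S_1(\tau_x)|$, and because $\tau_x\le\tau_{x_1}$ and $X_1$ is centered with a moment of order $2+\delta>2$, finiteness of $g(x)$ follows from the one-dimensional theory (as in \cite{Fe2}) exactly as for $g_1$; in fact $x_1+S_1(\tau_x)\le 0$ on $\{\tau_x=\tau_{x_1}\}$ need not hold when the exit is through the second coordinate, so one must split $\{\tau_x<\infty\}$ into the two exit events and bound the contribution of the $\{\tau_x=\tau_{x_2}<\tau_{x_1}\}$ part using that $x_1+S_1(\tau_x)$ is a stopped centered walk. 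Harmonicity of $h$ then comes from the same one-step decomposition as in the proof of Proposition~\ref{pro:harmonic1}: write $x_1=\Ex\bigl(x_1+S_1(1);\tau_x>1\bigr)+\int_{\{x+y\notin\OQ\}}(x_1+y_1)\,\Prob[X=dy]$ using $\Ex(X_1)=0$, decompose $g(x)$ according to the first step, and subtract; the boundary integrals match and one obtains $h(x)=\Ex\bigl(h(x+S(1));\tau_x>1\bigr)$.

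Next I would prove the inequality $h(x)\le h_1(x_1)$ in \eqref{eq:h}. The point is a stochastic-domination/coupling statement: exiting the quadrant is "easier" than exiting the half-line in the first coordinate, so $\tau_x\le\tau_{x_1}$ pointwise, and on $\{\tau_x<\infty\}$ the overshoot term is at least as favorable. More precisely, decomposing $g_1(x_1)-g(x)$ over the disjoint events $\{\tau_x=\tau_{x_1}\}$ and $\{\tau_x=\tau_{x_2}<\tau_{x_1}\le\infty\}$, on the first event the integrands agree, and on the second one I would use the strong Markov property at time $\tau_x$: conditionally, the first coordinate continues as a centered walk started from $x_1+S_1(\tau_x)>0$, and $\Ex\bigl(x_1+S_1(\tau_x)+\text{future overshoot}\bigr)\le x_1+S_1(\tau_x)$ because the stopped centered walk has nonpositive expected overshoot. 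This yields $g(x)\ge g_1(x_1)$, i.e. $h(x)\le h_1(x_1)$. Strict positivity of $h$ follows as in Proposition~\ref{pro:harmonic1}: there is $c>0$ with $\Prob[X_1\le -2c]\ge 1/3$ and $\Prob[X_2\le -2c]$ bounded below as well (using Basic Assumption~\ref{ass:basic}(i), non-degeneracy), so for $x_1\le c$ one gets $-g(x)$ bounded below by a positive constant; for larger $x_1$ one uses $h(x)\ge$ (something comparable to) the first-coordinate contribution, or harmonicity together with the small-$x_1$ bound.

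The main obstacle — and the heart of the proposition — is the uniform limit $h(x)/h_1(x_1)\to 1$ as $x_2\to\infty$, uniformly for $0<x_1\le x_2^{2+\delta}$. The strategy is to show $g(x)-g_1(x_1)\to 0$ in the appropriate normalized sense, i.e. $g_1(x_1)-g(x)=o(h_1(x_1))$ uniformly in that range. Using the decomposition above, $g_1(x_1)-g(x)$ equals an expectation supported on the event $\{\tau_{x_2}<\tau_{x_1}\}\cup\{\tau_{x_1}=\infty\}$ that the first coordinate "survives past the second coordinate's exit," and on that event the discrepancy is controlled by $\Ex\bigl(\,|x_1+S_1(\tau_{x_2})|\,;\,\tau_{x_2}<\tau_{x_1}\bigr)$ plus the contribution from $\{\tau_{x_1}=\infty\}$. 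Now $\Prob[\tau_{x_2}\le m]\to 0$ as $x_2\to\infty$ for any fixed $m$, and more quantitatively, because $\Ex(X_2)>0$ and $\Ex\bigl((X_2^-)^{3+\delta}\bigr)<\infty$, Lemma~\ref{lem:posdrift} gives $\Ex(\tau_{x_2}^{2+\delta};\tau_{x_2}<\infty)<\infty$ with tails that decay as $x_2\to\infty$; combined with the moment bounds on $S_1$ of order $2+\delta$ (Lemma~\ref{lem:Mn} and the ladder-variable estimate Lemma~\ref{lem:eqi}, applied to bound $|x_1+S_1(\tau_{x_2})|$ in terms of $\Mmax_1(\tau_{x_2})$ and the overshoot), one controls $\Ex\bigl(|x_1+S_1(\tau_{x_2})|;\tau_{x_2}<\tau_{x_1}\bigr)$ by a quantity that is $o(x_1)$, hence $o(h_1(x_1))$ by \eqref{eq:h1}, uniformly as long as $x_1$ does not grow faster than a fixed power of $x_2$ — this is exactly where the constraint $x_1\le x_2^{2+\delta}$ and the moment hypotheses $\Ex(|X_1|^{2+\delta})<\infty$, $\Ex((X_2^-)^{3+\delta})<\infty$ enter, to win the trade-off between the smallness of $\Prob[\tau_{x_2}$ small$]$ and the size of the first-coordinate fluctuations over a time window of length $\tau_{x_2}$. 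Making this trade-off quantitative and uniform — splitting according to whether $\tau_{x_2}$ is smaller or larger than some intermediate power of $x_2$, and invoking the third inequality of Lemma~\ref{lem:Mn} on the large-fluctuation part — is the technically delicate step; the rest is bookkeeping.
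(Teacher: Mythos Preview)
Your plan for harmonicity and for $h(x)\le h_1(x_1)$ matches the paper's: with $f(x)=\Ex\bigl(x_1+S_1(\tau_{x_2});\tau_{x_2}<\tau_{x_1}\bigr)\ge 0$ and $\tilde f(x)=\Ex\bigl(x_1+S_1(\tau_{x_1});\tau_{x_2}<\tau_{x_1}\bigr)\le 0$, one has $g(x)-g_1(x_1)=f(x)-\tilde f(x)\ge 0$ by inspection of signs, so your strong-Markov detour is harmless but unnecessary. Your outline for the uniform limit is also in the right direction; the paper bounds $f(x)$ by Cauchy--Schwarz as $f(x)\le x_1\,\Prob[\tau_{x_2}<\infty]+C_\delta\,\Ex(\tau_{x_2}^{2+\delta};\tau_{x_2}<\infty)^{1/2}$ (this is where $x_1\le x_2^{2+\delta}$ meets $\Prob[\tau_{x_2}<\infty]=\mathfrak o(x_2^{-(2+\delta)})$), and handles $\tilde f(x)/h_1(x_1)$ by splitting into large $x_1$ (where $|\tilde f|\le |g_1(x_1)|=o(x_1)$) versus bounded $x_1$ (where Lemma~\ref{lem:eqi} supplies equintegrability). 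Two corrections: $\tilde f$ involves $S_1$ at time $\tau_{x_1}$, not $\tau_{x_2}$, so your single quantity $\Ex(|x_1+S_1(\tau_{x_2})|;\tau_{x_2}<\tau_{x_1})$ does not capture it; and the third inequality of Lemma~\ref{lem:Mn} is not used here---it enters only later in the proof of Theorem~\ref{thm:main2}(d).

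The genuine gap is strict positivity. The one-dimensional trick from Proposition~\ref{pro:harmonic1} rests on the fact that $x_1+S_1(\tau_{x_1})\le 0$ always, so that restricting $-g_1$ to a subevent gives a legitimate lower bound. In two dimensions this breaks down: on $\{\tau_x=\tau_{x_2}<\tau_{x_1}\}$ one has $x_1+S_1(\tau_x)>0$, so $g(x)$ can be positive---for small $x_2$ the exit is typically through the second coordinate and $g(x)$ may sit close to $x_1$, not below $0$. Also, Assumption~\ref{ass:basic}(i) does not yield $\Prob[X_2\le -2c]>0$; indeed $X_2$ may be a.s.\ positive. The paper's route is different and uses the asymptotic estimates \emph{first}: one shows $h\ge 0$ via the martingale stopping lemma (writing $g_n(x)=x_1-\Ex(x_1+S_1(n);\tau_x>n)\le x_1$ and letting $n\to\infty$), then the bounds on $f$ and $\tilde f$ already established give $h>0$ on every set $(b_1,\infty)\times(b_2,\infty)$ with $b_2$ large depending on $b_1$, and finally Assumption~\ref{ass:basic}(ii) ($\Prob[X\in\OQ]>0$, not (i)) together with harmonicity and $h\ge 0$ propagates strict positivity to all of $\OQ$. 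You are missing both the non-negativity step and the correct propagation mechanism.
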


\begin{proof} 
The proof that $h$ is harmonic is a straightforward adaptation of the proof
in the one-dimensional case, Proposition \ref{pro:harmonic1}.

\smallskip

Write $g(x) = \Ex\bigl( x_1+ S_1(\tau_x)\,;\, \tau_x < \infty \bigr)$. Then by \eqref{eq:taumin},
$$
\begin{aligned}
g(x) &= \tilde g(x) + f(x) = g_1(x_1) - \tilde f(x) + f(x)\,, \quad\text{where}\\
\tilde g(x) &= \Ex\bigl( x_1+ S_1(\tau_{x_1})\,;\, \tau_{x_1}\le \tau_{x_2}\,,\; \tau_{x_1} < \infty \bigr)\,,\\
f(x) &= \Ex\bigl( x_1+ S_1(\tau_{x_2})\,;\, \tau_{x_2} < \tau_{x_1} < \infty \bigr)\,,\AND\\
\tilde f(x) &= \Ex\bigl( x_1+ S_1(\tau_{x_1})\,;\, \tau_{x_2} < \tau_{x_1} < \infty \bigr)\,,
\end{aligned}
$$
and $g_1$ is the function from Proposition \ref{pro:harmonic1}. 
Since $\tilde f(x) \le 0$ and $f(x) \ge 0$, we see that $g(x)\ge g_1(x_1)$, whence
$h(x) \le h_1(x_1)$. 
\\[4pt]
\underline{Claim 1.} \hspace*{2.3cm} $\lim\limits_{x_2\to \infty} f(x) = 0 \quad \text{uniformly for }\;
0 < x_1\le x_2^{2+\delta}\,.$

\smallskip

To prove this, we use Lemma \ref{lem:posdrift}: 
since $\Ex\bigl((X_2^-)^{3+\delta}\bigr) < \infty\,$,
$$
\Ex(\tau_{x_2}^{2+\delta}\,;\, \tau_{x_2} < \infty) < \infty \AND
\Ex(\Mmin_2^{2+\delta}\,;\, \tau_{x_2} < \infty) < \infty \,, 
$$
where $\Mmin_2 = \inf \{ S_2(n): n \ge 0\}\,$.
Write $\sigma_1^2 = \Ex(X_1^2)$, and recall that $\tau_{x_1} < \infty$ almost surely. 
For every $k \in \N$, using the Cauchy-Schwarz inequality,
$$
\Ex\bigl( x_1+ S_1(k)\,;\, \tau_{x_1} > \tau_{x_2} = k \bigr) \le
x_1\,\Prob[\tau_{x_1} > \tau_{x_2} = k] + 
\underbrace{\Ex\bigl(S_1(k)^2\bigr)^{1/2}}_{\displaystyle \sigma_1 \, k^{1/2}}\, 
\Prob[\tau_{x_2} = k]^{1/2}
$$
We again use the Cauchy-Schwarz inequality in the following estimate.
\begin{equation}\label{eq:CSest}
\begin{aligned}
 &f(x) = \sum_{k=1}^{\infty} \Ex\bigl( x_1+ S_1(k)\,;\, \tau_{x_1} > \tau_{x_2} = k \bigr) \\
&\le x_1\,\Prob[\tau_{x_1} > \tau_{x_2}] +  
\sigma_1 \sum_{k=1}^{\infty} k^{-(1+\delta)/2} \cdot k^{(2+\delta)/2} \, \Prob[\tau_{x_2} = k]^{1/2}\\
&\le x_1\,\Prob[\tau_{x_1} > \tau_{x_2}] + C_{\delta}\,
\Ex(\tau_{x_2}^{2+\delta}\,;\, \tau_{x_2} < \infty)^{1/2}\,, \quad\text{where}\quad
C_{\delta}= \sigma_1 \,\left(\sum_{k=1}^{\infty} k^{-(1+\delta)}\right)^{\!1/2}.
\end{aligned}
\end{equation}
Now, our moment assumption implies that 
\begin{equation}\label{eq:Min}
\Prob[\tau_{x_2} < \infty] \le  \Prob[\Mmin_2 \le -x_2] = \mathfrak{o}(x_2^{-(2+\delta)}) 
\quad \text{as }\; x_2 \to \infty\,.
\end{equation}
Therefore 
$$
\lim_{x_2 \to \infty} x_1\,\Prob[\tau_{x_1} > \tau_{x_2}] = 0 \quad \text{uniformly for }\;
0 < x_1 \le x_2^{2+\delta}\,.
$$
For the second term, we use Lemma \ref{lem:posdrift}, with $\utau$ referring to 
the second coordinate:
$$
\tau_{x_2}^{2+\delta}\,\uno_{[\tau_{x_2} < \infty]} \le 
\utau^{2+\delta}\,\uno_{[\tau_{x_2} < \infty]}\;
\le \utau^{2+\delta}\,, 
$$
and the middle term tends to $0$ almost surely, as $x_2 \to \infty\,$. By dominated convergence,
\begin{equation}\label{eq:asympbehaviorexpectation}
\lim_{x_2 \to \infty} \Ex(\tau_{x_2}^{2+\delta}\,;\, \tau_{x_2} < \infty) = 0.
\end{equation}
This concludes the proof of Claim 1. 
\\[5pt]
\underline{Claim 2.}  \hspace*{2.2cm}  
$\lim\limits_{x_2\to \infty} \dfrac{\tilde f(x)}{h_1(x_1)} = 0 \quad \text{uniformly for }\;
x_1 \in (0\,,\,\infty)\,.$

\smallskip

To verify this, we first note that since $0 \le -\tilde g(x) \le -g_1(x_1)$,
 we get from Proposition \ref{pro:harmonic1}, resp. the Appendix, that 
$$
\lim_{x_1\to \infty}  \frac{\tilde g(x)}{h_1(x_1)} 
= \lim_{x_1\to \infty}  \frac{\tilde f(x)}{h_1(x_1)} = 0 
\quad\text{uniformly in }\; x_2\,.
$$
Second, Lemma \ref{lem:eqi} yields equintegrability of 
$R_{x_1} = \bigl(-x_1 - S_1(\tau_{x_1})\bigr)\,\uno_{[\tau_{x_1} < \infty]}$ for $x_1 \in [0\,,\,m]$,
for each $m > 0$. Note that $R_{x_1} \ge 0\,$. Since $\Prob[\tau_{x_2} < \infty] \to 0$ as
$x_2 \to \infty$ by \eqref{eq:Min}, a standard argument yields that for each $m > 0$,
$$
\lim_{x_2\to \infty} \tilde f(x) = 0 \quad\text{uniformly for } x_1 \in [0\,,\,m].
$$
Indeed, for $x_1 \in [0\,,\,m]$ and any $t > 0$, 
$$
\begin{aligned}
-\tilde f(x) &\le \Ex(R_{x_1}\,;\, \tau_{x_2} < \infty)\\   
&\le t\, \Prob[R_{x_1} \le t\,,\; \tau_{x_2} < \infty] + \Ex(R_{x_1}\,;\, R_{x_1} > t)\\
%&\le t\, \Prob[\tau_{x_2} < \infty] + t\, \Prob[R_{x_1} > t] + \Ex\bigl((R_{x_1} - t)^+\bigr)\\
&\le t\, \Prob[\tau_{x_2} < \infty] + t\, \Prob[R_{x_1} > t] + 
\sum_{n \ge t} \Prob[R_{x_1} > t + n] \\
&\le t\, \Prob[\tau_{x_2} < \infty] + U(m)\, t\, \Prob[S(\ell_1) <- t] + 
U(m) \sum_{n \ge t} \Prob[S(\ell_1) <-(t + n)]\,.
\end{aligned}
$$
Since $S(\ell_1)$ is negative and integrable, the second and third term in the last
line tend to $0$ as $t \to \infty$. Thereafter, we can chose $x_2$ large enough
so that also the first term is small. Recalling from \eqref{eq:h1}
that $h_1$ is bounded away from zero, we also get that $\tilde f(x)/h_1(x_1) \to 0$
uniformly for $x_1 \in [0\,,\,m]$ as $x_2 \to \infty\,$.

\smallskip

Combining the two parts, we see that Claim 2 is true.

Again using that $h_1$ is bounded away from 0, Claims 1 and 2 prove \eqref{eq:h}.
\\[5pt]
\underline{Claim 3.}  \hspace*{2.5cm} 
$h(x) \ge 0 \quad \text{for all }\; x \in  \OQ\,$.

\smallskip

To see this, recall that $0 \le -\tilde g(x) \le -g_1(x_1) < \infty$, while we know from 
Claim 1 that $0\le f(x) < \infty$. Therefore 
$$
\Ex\bigl( |x_1+ S_1(\tau_x)|\,;\, \tau_x < \infty \bigr) < \infty\,,
$$
and by dominated convergence and the martingale stopping lemma,
$$
\begin{aligned}
g(x) &= \lim_{n \to \infty} g_n(x)\,, \qquad \text{where}\\
g_n(x) &= \Ex\bigl( x_1+ S_1(\tau_x)\,;\, \tau_x \le n \bigr)\\
&= \underbrace{\Ex\bigl( x_1+ S_1(\min\{n, \tau_x\})\bigr)}_{\displaystyle = x_1} - 
\Ex\bigl( \underbrace{x_1+ S_1(n)}_{\displaystyle > 0} \,;\, \tau_x > n \bigr) \le x_1\,.
\end{aligned}
$$ 
This proves Claim 3. 

\smallskip

It remains to prove that $h > 0$ strictly on $\OQ$.
Since $\tilde g(x) \le 0$, the estimate \eqref{eq:CSest} plus the first 
inequality in \eqref{eq:Min} lead to
$$
h(x) \ge x_1 - f(x) \ge  x_1\, \Prob[\,\Mmin_2 > -x_2] - C_{\delta}\,
\Ex(\tau_{x_2}^{2+\delta}\,;\, \tau_{x_2} < \infty)^{1/2}.
$$
By \eqref{eq:asympbehaviorexpectation}, this yields that for every $b_1 > 0$ (possibly small) there is 
$b_2 > 0$ (possibly large) such that $h > 0$ on
$(b_1\,,\,\infty) \times (b_2\,,\,\infty)$. Now, by the Basic Assumption
\ref{ass:basic}, there is $(a_1\,,a_2) \in \OQ$ such that
$\Prob[X \in (a_1\,,\,\infty) \times (a_2\,,\,\infty)] > 0$.
Inductively, we get 
$$
\Prob[S(k) \in (k\, a_1\,,\,\infty) \times (k\,a_2\,,\,\infty) \;\text{for}\; k=1,\dots, n] > 0
$$
for every $n \in \N$. Now let $x \in \OQ$. Given $x_1\,$, by the above there is $b_2$
such that $h > 0$ on $(x_1\,,\,\infty) \times (b_2\,,\,\infty)$. Then there is $n$
such that $x_2 + n\,a_2 \ge b_2$. But then
$$
\Prob[ x+S(n) \in (x_1\,,\,\infty) \times (b_2\,,\,\infty)\,, \tau_x > n] > 0\,.
$$
Therefore, since we already know that $h \ge 0$, 
$$
\begin{aligned}
h(x) &= \Ex\Bigl(h\bigl(x+S(n)\bigr)\,;\, \tau_x > n\bigr)\Bigr)\\
&\ge \Ex\Bigl(h\bigl(x+S(n)\bigr)\,;
\,x+S(n) \in (x_1\,,\,\infty) \times (b_2\,,\,\infty)\,, \tau_x > n\bigr)\Bigr)
 > 0\,. 
\end{aligned}
$$
This completes the proof of the proposition.
\end{proof}

We now choose $\ep \in (0\,,\,1/2)$ and define for $x = (x_1\,,x_2) \in \OQ$ 
$$
\nu_{x_2}(n) = \inf \{ k \in \N : x_2 + S_2(k) \ge \mf\,n^{1-3\ep/2}\}.
$$
(Recall that $\mf = \Ex(X_2)$.)

\begin{lem}\label{lem:nux} 
\hspace*{2.35cm} $\Prob[ \nu_{x_2}(n) > n^{1-\ep}] \le C \, n^{-(1-\ep)(1+\delta/2)}\,.$
\end{lem}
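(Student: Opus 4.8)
The plan is to reduce the first-passage event to a single-time lower-deviation event for the centered walk $S_2(k)-\mf k$, and then apply the moment inequality of Lemma \ref{lem:Mn}. Put $m = \lfloor n^{1-\ep}\rfloor$. If $\nu_{x_2}(n) > n^{1-\ep}$, then $\nu_{x_2}(n) \ge m+1$, so the walk has not reached level $\mf\,n^{1-3\ep/2}$ by time $m$; in particular $x_2 + S_2(m) < \mf\,n^{1-3\ep/2}$, and since $x_2 > 0$ this gives $S_2(m) < \mf\,n^{1-3\ep/2}$, i.e. $S_2(m) - \mf m < -\mf\bigl(m - n^{1-3\ep/2}\bigr)$. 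Because $3\ep/2 > \ep$ we have $n^{1-3\ep/2} = n^{-\ep/2}\cdot n^{1-\ep} = \mathfrak{o}(n^{1-\ep})$, while $m \ge n^{1-\ep} - 1$; hence there is $n_0 = n_0(\ep)$ with $m - n^{1-3\ep/2} \ge \tfrac12 n^{1-\ep}$ for all $n \ge n_0$, so that
$$
\Prob[\nu_{x_2}(n) > n^{1-\ep}] \;\le\; \Prob\bigl[\,S_2(m) - \mf m < -\tfrac{\mf}{2}\,n^{1-\ep}\,\bigr]
\;\le\; \Prob\bigl[\,|S_2(m) - \mf m| > \tfrac{\mf}{2}\,n^{1-\ep}\,\bigr]\qquad (n\ge n_0).
$$

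Next I would bound the right-hand side by a $(2+\delta)$-th moment estimate. The increments $X_2(k) - \mf$ are centered and i.i.d.\ with $\Ex\bigl(|X_2 - \mf|^{2+\delta}\bigr) < \infty$ by the moment assumption in (d). Applying Lemma \ref{lem:Mn} with $X$ replaced by $X_2 - \mf$ and $p = 2 + \delta$, followed by Markov's inequality, yields
$$
\Prob\bigl[\,|S_2(m) - \mf m| > \tfrac{\mf}{2}\,n^{1-\ep}\,\bigr]
\;\le\; \frac{c_{2+\delta}\,\Ex\bigl(|X_2 - \mf|^{2+\delta}\bigr)\,m^{(2+\delta)/2}}{(\mf/2)^{2+\delta}\,n^{(1-\ep)(2+\delta)}}
\;\le\; C'\,n^{(1-\ep)(2+\delta)/2 \,-\, (1-\ep)(2+\delta)}
\;=\; C'\,n^{-(1-\ep)(1+\delta/2)},
$$
where we used $m \le n^{1-\ep}$ and $(2+\delta)/2 = 1 + \delta/2$, and $C'$ depends only on $\ep$, $\delta$ and the law of $X$. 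This proves the asserted bound for $n \ge n_0$; for the finitely many $n < n_0$ one enlarges the constant using $\Prob[\,\cdot\,]\le 1$, obtaining a single $C$ valid for all $n$ — and since $x_2$ entered only through $x_2 > 0$, uniformly in $x \in \OQ$.

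The argument is essentially routine; two points merit a little care. First, the reduction from the first-passage time $\nu_{x_2}(n)$ to the one-time event $\{S_2(m) < \mf\,n^{1-3\ep/2}\}$, which uses only that $\nu_{x_2}(n) > m$ forces the walk to lie below the target at time $m$. Second — and this is the only place the precise exponents are used — verifying that the target level $\mf\,n^{1-3\ep/2}$ lies below the mean displacement $\mf m \approx \mf\,n^{1-\ep}$ by an amount of order $n^{1-\ep}$; this is exactly the inequality $3\ep/2 > \ep$ (equivalently $\ep>0$). The finite $(2+\delta)$-moment of $X_2$ then produces the rate $n^{-(1-\ep)(1+\delta/2)}$, whereas a finite second moment alone would only give $n^{-(1-\ep)}$.
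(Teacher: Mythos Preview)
Your proof is correct and follows essentially the same route as the paper: reduce $\{\nu_{x_2}(n)>n^{1-\ep}\}$ to the one-time deviation $\{S_2(m)-\mf m < -\mathrm{const}\cdot n^{1-\ep}\}$, then apply Markov's inequality together with the moment bound of Lemma~\ref{lem:Mn} (with $X$ replaced by $X_2-\mf$ and $p=2+\delta$). The only cosmetic difference is that you replace the factor $(1-n^{-\ep/2})$ by $\tfrac12$ for large $n$ and absorb small $n$ into the constant, whereas the paper keeps the exact factor throughout.
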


\begin{proof}
Let $\sigma_2^2 = \Var(X_2)$. Setting $p = 2+\delta$, we apply the first inequality in 
Lemma \ref{lem:Mn} to $X_2 - \mf$ and use Markov's 
inequality\footnote{Here and in many subsequent 
instances, values such as $n^{1-\ep}$ should be rounded to the next lower, resp. upper
integer. This is omitted in the notation, since it will be clear from the context.}:
$$
\begin{aligned}
 \Prob\bigl[ \nu_{x_2}(n) > n^{1-\ep}\bigr] 
& \le \Prob\bigl[x_2 + S_2(n^{1-\ep}) < \mf\,n^{1-3\ep/2}\bigr]\\
&\le \Prob\bigl[S_2(n^{1-\ep}) -\mf\,n^{1-\ep} < - \mf\,(1-n^{-\ep/2})\, n^{1-\ep}\bigr]\\
& \le \Prob\bigl[\,|S_2(n^{1-\ep}) -\mf\,n^{1-\ep}|^p > \bigl(\mf\,(1-n^{-\ep/2})\bigr)^p\, 
n^{(1-\ep)p}\bigr]\\
&\le \frac{ c_p\, n^{(1-\ep)p/2}\,\Ex\bigl(|X_2-\mf|^p\bigr)}
{\bigl(\mf\,(1-n^{-\ep/2})\bigr)^p\,n^{(1-\ep)p}}
\,.
\end{aligned}
$$
This proves the lemma. 
\end{proof}

\begin{proof}[Proof of Theorem \ref{thm:main2}(d)]
We choose $s$, $\ep$ and $\phi_n$ as follows.
\begin{equation}\label{eq:ep}
\ep = \frac12 - \frac{1}{s} \AND \phi_n = n^{-\ep/s} + n^{-\ep}\,,\quad s > 2\,.
\end{equation}
For the starting point $x \in \OQ$, we assume that $x_1 < n^{1/2-\ep} = n^{1/s}$ and
$x_2 < \mf\, n^{1-3\ep/2}$.  
$$
\begin{aligned}
\Prob[\tau_x > n] = \Prob[ \tau_x > n\,;\, \nu_{x_2}(n) \le n^{1-\ep}] &+
                     \Prob[\nu_{x_2}(n) \ge \tau_x > n] \\
                      &+ \Prob[\tau_x > n\,;\,\tau_x > \nu_{x_2}(n) > n^{1-\ep}]\\
=  \Prob[ \tau_x > n\,;\, \nu_{x_2}(n) \le n^{1-\ep}] &+ \mathfrak{o}(n^{-1/2}), 
\end{aligned}
$$
because by Lemma \ref{lem:nux},
$$
\Prob[\nu_{x_2}(n) \ge \tau_x > n] + \Prob[\tau_x > n\,;\,\tau_x > \nu_{x_2}(n) > n^{1-\ep}]
\le \Prob[ \nu_{x_2}(n) > n^{1-\ep}] \le C \, n^{-(1-\ep)(1+\delta/2)}\,.
$$ 
Now we write 
$$
\begin{aligned}
&\Prob[ \tau_x > n\,;\, \nu_{x_2}(n) \le n^{1-\ep}] \\
&\qquad = \sum_{k=1}^{n^{1-\ep}}\int_{y\,:\,y_2 \,\ge\, \mf\,n^{1-3\ep/2}} 
\Prob[\tau_x > k\,,\; \nu_{x_2}(n)=k\,,\; x+S(k)=dy] \,\, 
\Prob[\tau_y > n-k]\,.
\end{aligned}
$$
We note that
$\Prob[\tau_{y_1} > n-k] - \Prob[\tau_{y_2} \le n] 
\le \Prob[\tau_y > n-k] \le \Prob[\tau_{y_1} > n-k]$.
By \eqref{eq:Min}, we\\[3pt] have for $y_2 \ge \mf\,n^{1-3\ep/2}$ that
$$
\Prob[\tau_y > n-k] = \Prob[\tau_{y_1} > n-k] + \mathfrak{o}(n^{-(1-3\ep/2)(2+\delta)})
= \Prob[\tau_{y_1} > n-k] + \mathfrak{o}(n^{-1/2})
$$
by our choices of $\ep$ and $s$. 
Therefore, independently of the choice of $x$, 
$$
\begin{aligned}
&\Prob[ \tau_x > n\,;\, \nu_{x_2}(n) \le n^{1-\ep}] \\
&\quad = \sum_{k=1}^{n^{1-\ep}}\int_{y\,:\,y_2 \,\ge\,\mf\, n^{1-3\ep/2}} 
\Prob[\tau_x > k\,,\; \nu_{x_2}(n)=k\,,\; x+S(k)=dy] \,\, 
\Prob[\tau_{y_1} > n-k] \; + \mathfrak{o}(n^{-1/2})\\
&\quad = \circa + \circb + \mathfrak{o}(n^{-1/2})\,, \qquad\qquad \text{where}\\[6pt]
&\quad \circa = \sum_{k=1}^{n^{1-\ep}}
\int_{\scs y\,:\,y_2 \,\ge\,\mf\, n^{1-3\ep/2}  \atop \scs y_1 \,\le\, \phi_n\,n^{1/2}} 
\Prob[\tau_x > k\,,\; \nu_{x_2}(n)=k\,,\; x+S(k)=dy] \,\, 
\Prob[\tau_{y_1} > n-k] \AND\\
&\quad \circb = \sum_{k=1}^{n^{1-\ep}}
\int_{\scs y\,:\,y_2 \,\ge\,\mf\, n^{1-3\ep/2} \atop \scs y_1 \,>\, \phi_n\,n^{1/2}} 
\Prob[\tau_x > k\,,\; \nu_{x_2}(n)=k\,,\; x+S(k)=dy] \,\, 
\Prob[\tau_{y_1} > n-k] \,.
\end{aligned}
$$
By propositions \ref{pro:harmonic1}, \ref{pro:dim1} and \ref{pro:harmonic}, we have 
with $\kappa = (\pi\, \sigma_2^2/2)^{-1/2}$
$$
\Prob[\tau_{y_1} > n-k] 
\sim \kappa\, h_1(y_1) \,n^{-1/2}  \sim \kappa\, h(y) \,n^{-1/2} 
$$
uniformly for $y$ in the range of integration of term $\circa$ and $k \le n^{1-\ep}$.
Therefore
$$
\begin{aligned}
\circa &\sim \kappa\, n^{-1/2} \,
\Ex \biggl( h\Bigl(x+S\bigl(\nu_{x_2}(n)\bigr)\Bigr)\,;\, {\displaystyle
\tau_x > \nu_{x_2}(n)\,,\;\nu_{x_2}(n) \le n^{1-\ep}\,,\;  \atop \displaystyle
x_1 + S_1\bigl(\nu_{x_2}(n)\bigr) \le \phi_n\,n^{1/2} }\biggr)\\
&= \kappa\, n^{-1/2} ( \circc - \circd)\,,\qquad \text{where}\\[5pt]
\circc &= \Ex \Bigl( h\Bigl(x+S\bigl(\nu_{x_2}(n)\bigr)\Bigr)\,;\, 
\tau_x > \nu_{x_2}(n)\,,\;\nu_{x_2}(n) \le n^{1-\ep}\Bigr) \quad\AND\\
\circd &= \Ex \Bigl( h\Bigl(x+S\bigl(\nu_{x_2}(n)\bigr)\Bigr)\,;\, 
\tau_x > \nu_{x_2}(n)\,,\;\nu_{x_2}(n) \le n^{1-\ep}\,,\; 
x_1 + S_1\bigl(\nu_{x_2}(n)\bigr) > \phi_n\,n^{1/2}\Bigr).
\end{aligned}
$$
Also, $\Prob[\tau_{y_1} > n-k] \le C \, y_1 \,n^{-1/2}$ 
for  $y$ in the range of integration of term $\circb$. Therefore we get that both
$n^{1/2}\,\circb$ and $\circd$ are bounded above by $C\, \circd'$, where
$$
\begin{aligned}
\circd' &= \Ex \Bigl( x_1+S_1\bigl(\nu_{x_2}(n)\bigr)\,;\, 
\tau_x > \nu_{x_2}(n)\,,\;\nu_{x_2}(n) \le n^{1-\ep}\,,\; 
x_1 + S_1\bigl(\nu_{x_2}(n)\bigr) > \phi_n\,n^{1/2}\Bigr)\\
&\le  C \, \Ex \bigl( n^{1/2-\ep}+\Mmax_1(n^{1-\ep})\,;\, 
n^{1/2-\ep} + \Mmax_1(n^{1-\ep}) > \phi_n\,n^{1/2}\bigr)\,.
\end{aligned}
$$
We now apply Lemma \ref{lem:Mn} to the first marginal. 
We have $\phi_n\,n^{1/2} - n^{1/2-\ep} = n^{1/2 - \ep/s}\,.$
Next, we choose $p=2+\delta$, where $\Ex(|X|^{2+\delta}) < \infty$.
Furthermore, we substitute
$n^{1-\ep} = k= k_n\,$, whence 
$$
\begin{aligned}
n^{1/2-\ep/s} &= k^{1/2 + \alpha} \AND n^{1/2 - \ep} = k^{1/2 - \beta}\,,
\quad \text{where}\\ 
\alpha &= \frac{\ep}{1-\ep}\Bigl(\frac{1}{2}-\frac{1}{s}\Bigr)
\AND \beta = \frac{\ep}{2(1-\ep)}.
\end{aligned}
$$
Thus, using $k$ in the place of $n$ in Lemma \ref{lem:Mn},
$$
\begin{aligned}
\Ex \bigl( n^{1/2-\ep}&+\Mmax_1(n^{1-\ep})\,;\, 
n^{1/2-\ep} + \Mmax_1(n^{1-\ep}) > \phi_n\,n^{1/2}\bigr)\\
&= \Ex \bigl( k^{1/2-\beta}+\Mmax_1(k)\,;\, 
\Mmax_1(k) > k^{1/2 + \alpha}\bigr)
\le  c_p \, \Ex\bigl(|X|^p\bigr)\,\frac{2p-1}{p-1}\, k^{1/2 - (p-1)\alpha}\,.
\end{aligned}
$$
We need that $1/2 - (p-1)\alpha < 0$, where $p=2+\delta$. This can be achieved
by choosing $s$ sufficiently large (*). Then $\circd' \to 0$ uniformly for 
$x_1 \le n^{1/2-\ep} = n^{1/s}$, independently of~$x_2\,$. 

\medskip

We come to the estimation of the principal term $\circc$.
With $\gamma_{x_2}(n) = \min \{ \nu_{x_2}(n), n^{1-\ep}\}$,  
$$
\begin{aligned}
\circc &= \Ex \Bigl( h\Bigl(x+S\bigl(\gamma_{x_2}(n)\bigr)\Bigr)\,;\, 
\tau_x > \gamma_{x_2}(n)\,,\;\nu_{x_2}(n) \le n^{1-\ep}\Bigr) =  h(x) - \circe\,,\quad \text{where}\\[5pt]
\circe &= \Ex \Bigl( h\bigl(x+S(n^{1-\ep})\bigr)\,;\, 
\tau_x > n^{1-\ep}\,,\;\nu_{x_2}(n) > n^{1-\ep}\Bigr). 
\end{aligned}
$$
At last, once more since $h(x) \le C\, x_1$ by propositions 
\ref{pro:harmonic} and \ref{pro:harmonic1},
$$
\begin{aligned}
 \circe &\le C\,\Ex \bigl( x_1+S_1(n^{1-\ep}) \,;\, 
\tau_x > n^{1-\ep}\,,\;\nu_{x_2}(n) >  n^{1-\ep}\Bigr)\\
&\le C\, \Bigl(\Ex \bigl(x_1+S_1(n^{1-\ep})\bigr)^2\Bigr)^{1/2}\,
\Bigl(\Prob[\tau_x > n^{1-\ep}\,,\;\nu_{x_2}(n) > n^{1-\ep}]\Bigr)^{1/2}\\
&\le C \, \bigl(2\,x_1^2 + 2\,n^{1-\ep}\sigma_1^2\bigr)^{1/2}\,
\Bigl(\Prob[\nu_{x_2}(n) > n^{1-\ep}]\Bigr)^{1/2}, 
\end{aligned}
$$
which tends to $0$ as $n \to \infty$ by Lemma \ref{lem:nux} and our assumption that
$x_1 \le n^{1/2-\ep} = n^{1/s}$. Recall from (*) above
that $s$ depends on the $\delta$ of the moment condition for $X_1\,$.
\end{proof}

\section{The 2-dimensional Lindley process}\label{sec:Lindley}

We first explain how the queueing process is related with the exit times.

\begin{proof}[Proof of Lemma \ref{lem:relation}]
 In a good number of references, the increments in the definition
of $W(n)$ come with a ``plus'' sign. We have chosen the ``minus'' because
this is more convenient when relating the process with the exit times. In
{\sc Feller's} second volume~\cite[VI.9]{Fe2}, the ``plus'' is used, and the
one-dimensional case is considered. See in particular 
\cite[Theorem on p. 198]{Fe2}.  That theorem applies without changes 
also to higher dimensions,
and rewritten in terms of the ``minus'' sign, it says the following for 
$x = (x_1\,,x_2) \in \R_+^2\,$.
$$
\begin{aligned}
\Prob\bigl[W^0(n) \in [0\,,\,x_1) \times [0\,,\,x_2)\bigr] 
= \Prob \bigl[ - \Mmin_1(n) < x_1\,,\; - \Mmin_2(n) < x_2 \bigr]& \\ 
= \Prob \bigl[ x + S(k) \in \OQ \;\text{ for all } \; k \le n\bigr]& 
= \Prob[\tau_x > n]. 
\end{aligned}
$$
This concludes the proof.
\end{proof}

We now consider recurrence versus transience of the 2-dimensional  Lindley
process.

\bigskip

\textbf{Negative drift in at least one coordinate}

\smallskip

\begin{proof}[Proof of Theorem \ref{thm:main1}(a)] Suppose that 
$\Ex(X_1^+) < \Ex(X_1^-) \le \infty\,$. Then $S_1(n) \to -\infty$ almost surely.
The times when the first marginal process $W_1^0(n)$ starting at $0$ visits $0$ are the 
non-strictly increasing ladder epochs of $S_1(n)$, see \cite{Li} and 
\cite[VI.9]{Fe2} (and keep in mind the ``plus/minus'' sign issue mentioned above). 
It is well known that  the latter terminates almost surely, so that $W_1^0(n) \to \infty$ and 
thus also $W_1^{x_1}(n) \to \infty$ almost surely for every $x_1 \ge 0\,$: 
the first marginal process is transient,
whence also the 2-dimensional process is transient. 
\end{proof}

\bigskip

\textbf{Zero drift and negative correlation}

\smallskip

We now consider the case when both marginals have zero drift, but the 
correlation is negative.

\begin{proof}[Proof of Theorem \ref{thm:main1}(c), transient case]
Under the moment conditions of  Theorem \ref{thm:main1}(c), when 
$\rho(X_1\,,X_2) < 0$ then the combination of Lemma \ref{lem:relation} with
Theorem \ref{thm:main2}(c) shows that
$$
\sum_{n=0}^{\infty} \Prob[ W^0(n) \in [0\,,\,x_1) \times [0\,,\,x_2)] < \infty \quad
\text{for all }\; x_1\,,\,x_2 > 0. 
$$
This is the expected number of visits of the process to each of those rectangles.
Therefore, with probability 1, each rectangle is visited only finitely often, so that
$|W^0(n)| \to \infty\,$ and the process is transient.  
\end{proof}

\bigskip

\textbf{Recurrence}

\smallskip

We now turn our attention the the remaining cases, that is, statements (b) and (d)
of Theorem \ref{thm:main1}, as well as (c) with $\rho(X_1\,,X_2) \ge 0$.

\begin{proof}[Conclusion of the proof of Theorem \ref{thm:main1}]
In all of those remaining cases, we have by Theorem \ref{thm:main2}
$$
\sum_{n=0}^{\infty} \Prob\bigl[ W^0(n) \in [0\,,\,x_1) \times [0\,,\,x_2)\bigr] = \infty \quad
\text{for all }\; x_1\,,\,x_2 > 0. 
$$
In the case when the distribution of $X = (X_1\,,X_2)$ is non-lattice, 
this alone does not guarantee that the process is topologically recurrent.
But thanks to Assumption \ref{ass:basic}(ii), there is $x = (x_1\,,x_2) \in \OQ$
such that for each $n$,
$$
\Prob\bigl[X(n+1) \in [x_1\,,\,\infty) \times [x_2\,,\infty)\bigr] 
= \Prob\bigl[X \in [x_1\,,\,\infty) \times [x_2\,,\infty)\bigr]  > 0\,.
$$
But if $W^0(n) \in [0\,,\,x_1) \times [0\,,\,x_2)$ and 
$X(n+1) \in [x_1\,,\,\infty) \times [x_2\,,\infty)$ then $W^0(n+1)= 0$.
Therefore, for this choice of $(x_1\,,x_2)$, and since $W^0(n)$ and $X(n+1)$
are independent, 
$$
\begin{aligned}
 \sum_{n=0}^{\infty} \Prob[ W^0(n+1) = 0 ] &\ge 
\sum_{n=0}^{\infty} \Prob\bigl[ W^0(n) \in [0\,,\,x_1) \!\times\! [0\,,\,x_2)\,,\;
X(n+1) \in [x_1\,,\,\infty) \!\times\! [x_2\,,\infty)\bigr]\\
&=  \Prob\bigl[X \in [x_1\,,\,\infty) \!\times\! [x_2\,,\infty)\bigr]
\sum_{n=0}^{\infty} \Prob\bigl[ W^0(n) \in [0\,,\,x_1) \!\times\! [0\,,\,x_2)\bigr] = \infty. 
\end{aligned}
$$
Since $0$ is a single state of our Markov process, this implies via basic Markov chain 
theory that it is a recurrent state: 
$$
\Prob[ W^0(n)=0 \;\text{ infinitely often}\,]=1\,.
$$
Now let us look at this under the viewpoint of stochastic
dynamical systems (SDS): the random mappings $F_k(x) = \max \{ 0, x- X(k)\}$
are elements of the semigroup $\mathfrak{L}$ of contractions of $\R_+^2$ 
with Liptschitz constants $\le 1$. We have
$$
W^x(n) = F_n \circ \dots \circ F_1(x)\,.
$$
In particular, $|W^x(n) - W^y(n)|$ is decreasing in $n$, whence 
$|W^x(n) - W^0(n)| \le |x|$. Consequently, our SDS is non-transient:
$$
\Prob\bigl[ |W^x(n)| \to \infty \bigr] = 0 \quad \text{for every }\;
x \in \R_+^2\,.
$$
$\mathfrak{L}$ carries the topology of uniform convergence on compact
sets. Denote by $\wt \mu$ the (common) distribution of the 
i.i.d. random mappings $F_n\,$. Then the above arguments which led to recurrence of
the state $0$ entail
that the constant mapping $x \mapsto 0$ can be approximated in $\mathfrak{L}$
by a sequence $f_n \circ \dots \circ f_1\,$, $n \in \N\,$, where each function 
$f_k$ is in the support of $\wt \mu$. 
At this point, we can invoke a result going back to \cite{Le}, developped further
in \cite{Be} and \cite{PeWo1}; see {\sc Kloas and Woess}~\cite[Prop. 2.5]{KlWo} for
a compact formulation. It implies the existence of a limit set $\Ll$ and recurrence
as stated in the Introduction in \eqref{eq:limitset} and the preceding paragraph.
\end{proof}

\bigskip

\textbf{Discussion}

\smallskip

\emph{Invariant measures.} In the cases where the two-dimensional Lindley process is recurrent,
it follows from \cite{PeWo1} that it has a unique invariant measure $\nu$ up to constant factors.
It is supported on the limit set $\Ll$, and as a starting measure for the process, it makes
the time shift ergodic. For details, see \cite[Thm. 2.13]{PeWo1}.
In particular, its marginals $\nu_1$ and $\nu_2$ are the unique invariant measures for the
respective marginal processes. It is well understood that the invariant measure for 
a recurrent one-dimensional Lindley process has finite total mass when the increment has 
positive expectation. (Recall that we \emph{substract} the increment.) Also, that measure has
infinite total mass in the drift-free case. This is the reason why for the two-dimensional process,
$\nu$ has finite total mass (positive recurrence) 
in Theorem \ref{thm:main1}(b), while it has infinite total mass (null recurrence)
in Theorem \ref{thm:main1}(c) and (d).  

\medskip

\emph{The roles of Assumption \ref{ass:basic}(ii) and of the condition
$\rho(X_1\,,X_2) \ge 0$.} Consider the typical example of a clerk at a counter serving
a queue of customers. In that case, the waiting time of the $n^{\text{th}}$ customer
is modelled by a one-dimensional Lindley process, and recurrence means that the process
returns to state $0$ with probability one - the queue will be empty and the clerk 
at the counter will be able to have a break. In the two-dimensional case, we have two
counters and two queues, and in general we assume that the clerks do not operate
independently. Then both Assumption \ref{ass:basic}(ii) and the non-negative correlation
can be interpreted in the sense that the two clerks are co-operative, which will give
them the possibility to have a joint break.

Assumption \ref{ass:basic}(ii) was used in the proof of Proposition \ref{pro:harmonic}
in order to show that $h > 0$ strictly on all of $\OQ$. Without it, our proof does not
allow us to go beyond the statement that
for every $b_1 > 0$ (possibly small) there is 
$b_2 > 0$ (possibly large) such that $h > 0$ on
$(b_1\,,\,\infty) \times (b_2\,,\,\infty)$.

In the proof of Theorem \ref{thm:main1}(d) this would lead to the restriction that
we only get
$$
\sum_{n=0}^{\infty} \Prob\bigl[ W^0(n) \in [0\,,\,x_1) \times [0\,,\,x_2)\bigr] = \infty 
$$
for sufficiently large $x_1$ and $x_2$. 

Now, in the case when the process is discrete (in which case we assume without loss
of generality that the increments are in $\Z^2$ and the process evolves within
$\N_0^2$), this does imply recurrence in the sense that the process visits each point
of the (integer) limit set infinitely often with probability one. However, the origin is
then not necessarily part of the limit set: the two clerks will not be able to have
a joint break.

On the other hand, when the process is non-discrete then it is by no means clear that divergence
of the above series implies topological recurrence -- here, recurrence was deduced because
on the basis of Assumption \ref{ass:basic}(ii) we could show that
$\sum_n \Prob\bigl[ W^0(n) = 0] = \infty$.     

\medskip

\emph{The limit set.} $\Ll$ depends on the support of the distribution of the increments.
In our case, most importantly, it contains the origin. 
It appears quite hard to determine the full limit set 
explicitly, compare with \cite{KlWo}. 
\medskip

\emph{Moment assumptions.}
In theorems \ref{thm:main1}(d) and \ref{thm:main2}(d), we have assumed among other
that $\Ex((X_2^-)^{3+\delta}) < \infty\,$. This is used in the proof of Claim 1 in Proposition
\ref{pro:harmonic}. We believe that a finite moment of order
$2 + \delta$ should suffice, but so far we have not succeeded in proving this. 

\medskip

\emph{Higher dimensions.} We can consider the same issues on $\R_+^d$ for any $d \ge 2$.
The model random variable for the increments is then $X = (X_1\,,\dots, X_d)$, and the definition
of $W(n)$ remains the same, as well as Assumption \ref{ass:basic} with the obvious adaptation. 

If there is one coordinate with $\Ex(X_i^+) < \Ex(X_i^-) \le \infty\,$
then $W(n)$ is transient, while if all coordinates satisfy 
$\Ex(X_i^-) < \Ex(X_i^+) < \infty\,$, then $W(n)$ is positive recurrent.

Also, if one coordinate is centered and all others have positive expectiation, then
under the same moment conditions as in Theorem \ref{thm:main1}(d) one gets null recurrence;
the proof remains practically the same.

For recurrence, one cannot have more than two centered coordinates. So the main case still
to be studied is the analogue of Theorem \ref{thm:main1}(d) in the case when there are two
centered coordinates and all the other coordinates have positive expectation. 
There are several substantial additional issues to be tackled in this situation, which
we reserve for future work.

\bigskip

\textbf{Remarks on reflected random walk}

\smallskip

A model which is very similar to the queueing process is reflected random walk.
In dimension 1, with starting point $x > 0$, both processes evolve like a 
random walk $x- S(n)$ as long as it stays non-negative. When the walk enters the 
negative half-axis, the value is reset to zero for the queueing process $W^x(n)$, while 
for  reflected random walk $R(n) = R^x(n)$, the sign is changed. In arbitrary dimension, this
means that reflected random walk is given by
$$
R(0) = x \in \R^+\,, \quad R(n) = |R(n-1) - X(n)|\,,
$$
where (attention!) the absolute value is taken coordinate-wise.
The one-dimensional model is very well studied; instead of re-displaying all references,
we refer to \cite{PeWo1}, as well as to \cite{Pe} and \cite{KlWo}, where the recurrence 
of the multidimensional 
reflected random walk is  studied in the case where $\Ex(X_i) > 0$ for all coordinates. 

In general, it is easy to see that transience of the multi-dimensional queueing process
implies transience of the corresponding reflected random walk. In other words, the recurrence
issue is somewhat harder for reflected random walk. In the following case, the two processes
are recurrent, resp. transient simultaneously.

\begin{lem}\label{lem:simult}
If there is $\Delta > 0$ such that $X_i \ge -\Delta$ almost surely for $i=1,\dots, d$
then for the two processes starting at the same point, 
$$
R_i(n) \le X_i(n) + \Delta \quad \text{almost surely for $i=1,\dots, d$ and all $n$.}
$$
\end{lem}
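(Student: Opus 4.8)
The plan is a pathwise coupling argument: fix the common sequence of increments, observe that everything decouples over the coordinates, and prove the bound by induction on $n$ in dimension one. Both recursions $W(n)=\max\{0,W(n-1)-X(n)\}$ and $R(n)=|R(n-1)-X(n)|$ act coordinate-wise, and the two processes start from the same point, so it suffices to show the following: if $X(1),X(2),\dots$ are real increments with $X(k)\le\Delta$ almost surely and $R(0)=W(0)=x\ge 0$, then $R(n)\le W(n)+\Delta$ for all $n$. (The displayed inequality is an \emph{upper} bound on $R$, so it is the positive part of the increments that has to be controlled; see the last paragraph.)

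First I would set up the induction. The case $n=0$ is trivial, since $R(0)=W(0)$. For the step, assume $R(n-1)\le W(n-1)+\Delta$ and put $a=R(n-1)\ge 0$, $b=W(n-1)\ge 0$, $c=X(n)\le\Delta$, so that $R(n)=|a-c|$ and $W(n)=(b-c)^+$, while the induction hypothesis reads $a\le b+\Delta$. I would then distinguish the case $a\ge c$ (the reflected walk does not reflect at this step) from the case $a<c$ (it does).

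Next, the two cases. If $a\ge c$, then $R(n)=a-c\le (b+\Delta)-c=(b-c)+\Delta\le (b-c)^++\Delta=W(n)+\Delta$, using only the induction hypothesis. If $a<c$, then $c>a\ge 0$, hence $R(n)=c-a\le c$, and this is the single place where the increment bound enters: $c\le\Delta$ gives $R(n)\le\Delta\le W(n)+\Delta$. This closes the induction, and the statement on $\R_+^d$ follows one coordinate at a time. I would also record the companion inequality $W_i(n)\le R_i(n)$, which holds with no assumption on the increments and is proved by the same induction (if $b\le a$ then $(b-c)^+\le(a-c)^+\le|a-c|$); combined with $R_i(n)\le W_i(n)+\Delta$ it yields $|W(n)|\le|R(n)|\le|W(n)|+\Delta\sqrt{d}$, so the queueing process is transient if and only if the reflected random walk is, as claimed in the text.

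I do not expect a genuine obstacle here; the computation is elementary. The one point demanding attention is the \emph{direction} of the bound on the increments. For the queueing process by itself only a lower bound matters --- it is what caps the per-step growth of $W$. The reflected walk, however, can jump far \emph{upward} in a single step, precisely when $R(n-1)-X(n)$ would turn negative, i.e.\ when $X(n)$ is large and positive; so to keep $R$ within $\Delta$ of $W$ one needs a bound on the positive jumps of $X_i$, and this is exactly what is exploited in the reflecting case above.
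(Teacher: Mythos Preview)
The paper gives no proof of this lemma beyond the sentence ``The proof is straightforward,'' so there is nothing substantive to compare against. Your coordinate-wise induction is the natural argument and is correct.

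You are also right to flag the direction of the increment bound. The lemma as printed assumes $X_i \ge -\Delta$, but the inequality $R_i(n) \le W_i(n)+\Delta$ (which is clearly the intended conclusion --- the printed $X_i(n)$ in place of $W_i(n)$ is a typo, as the sentence following the lemma confirms) requires the \emph{upper} bound $X_i \le \Delta$. With only a lower bound the claim is false: in one dimension take $X(n)\equiv c$ with $c>\Delta>0$ and start at $0$; then $W(n)\equiv 0$ while $R(1)=c>\Delta$. Your reflecting case $a<c$ is exactly where the upper bound $c\le\Delta$ is used, and nowhere does a lower bound enter. So the paper's hypothesis is misstated, and your corrected version is the one that actually supports the application made immediately afterwards (recurrence of $W$ at the origin forces $R$ into $[0,\Delta]^d$ infinitely often). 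Your companion observation $W_i(n)\le R_i(n)$, needing no boundedness, is also correct and completes the equivalence of transience for the two processes.
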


The proof is straightforward. Thus, when the multidimensional Lindley process
visits the origin infinitely often with probability $1$, then the corresponding 
multidimensional reflected random walk visits the set $[0\,,\,\Delta]^d$
infinitely often with probability $1$. By the contractivity properties of
$\bigl(R^x(n)\bigr)$, this implies that the reflected random walk is topologically
recurrent on its limit set, compare with \cite{PeWo1} and \cite{KlWo}.

In dimension 2, this applies, in particular, to the situation of Theorem \ref{thm:main1}.

\section{Appendix: a proof due to D. Denisov and V. Wachtel}\label{sec:app}

In this subsection, the proof of Proposition \ref{pro:harmonic1} is completed. That is,
we present the elaboration of an argument of Denisov and Wachtel which proves
\begin{equation}
\lim_{x \to \infty} \frac{h_1(x)}{x} =1. \tag{\ref{eq:h1}}
\end{equation}

Given the centered real random variable $X$ with $\Ex(X^2) < \infty\,$,
we consider the following functions $a,  b, m: \R_+ \to \R_+$ associated with
the negative part $X^-$ of~$X$.
\begin{align*}
a(x) &= -\Ex(x+X\,;\,x+X\leq 0) = \int_x^{\infty} \Prob[X \le -y]\,dy\,, \\
b(x) &=  \int_x^{\infty} a(y)\,dy\,,\quad\text{and}  \quad
m(x) = \int_0^x b(y)\,dy\,,\quad x \ge 0.
\end{align*}
Similarly, we set $\displaystyle \bar a(x) = \int_x^{\infty} \Prob[X > y]\,dy.$ 
One easily verifies that $\;\displaystyle 2b(x) = \Ex\bigl((\max\{X^- -x, 0\})^{2}\bigr) = 
 \int_{-\infty}^x (x+y)^2dF(y) \to 0$ as $x \to \infty\,$,
whence also
\begin{equation}\label{eq:m}
 \lim_{x \to \infty} \frac{m(x)}{x} =0.
\end{equation}

\begin{pro}\label{pro:AR}
There are positive constants $R$ (sufficiently large) and $A$
such that the function
$$
V(x) = x + A\, m(x) + R\,,\quad x \ge 0\,,
$$ 
is superharmonic for the one-dimensional random walk $S(n)$ killed when exiting $\R_+\,$, that is,
$$
\Ex \Bigl( V\bigl(x + S(1)\bigr)\,;\, \tau_x > 1 \Bigr) \le V(x) \quad \text{for all }\; x \ge 0. 
$$
\end{pro}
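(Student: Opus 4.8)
The plan is to compute the one-step increment $\Ex\bigl(V(x+S(1))\,;\,\tau_x>1\bigr) - V(x)$ directly and to show it is $\le 0$ once $R$ is large and $A$ is chosen suitably. Write the increment of the walk as $X = X(1)$ and split according to whether $x+X>0$ or $x+X\le 0$. On the event $\tau_x>1$, i.e. $x+X>0$, the contribution of the linear part $x$ of $V$ is $\Ex(x+X\,;\,x+X>0) = x - \Ex(x+X\,;\,x+X\le 0) = x + a(x)$, using $\Ex(X)=0$ and the definition of $a$. The constant $R$ contributes $R\,\Prob[x+X>0] = R\bigl(1-\Prob[x+X\le 0]\bigr)$, so it produces a \emph{negative} term $-R\,\Prob[x+X\le 0]$ plus $R$. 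Collecting the linear-plus-constant pieces, the increment equals
\begin{equation*}
a(x) - R\,\Prob[X\le -x] + A\,\Bigl(\Ex\bigl(m(x+X)\,;\,x+X>0\bigr) - m(x)\Bigr).
\end{equation*}
So the whole task reduces to controlling $\Ex\bigl(m(x+X)\,;\,x+X>0\bigr) - m(x)$ and balancing it against $a(x)$ and the gain term $-R\,\Prob[X\le -x]$.

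The key step is a second-order Taylor-type estimate for $m$. Since $m' = b$ and $b' = -a \le 0$ with $b$ decreasing, $m$ is concave, so $m(x+y) \le m(x) + b(x)\,y$ for all $y$ with $x+y \ge 0$; more precisely $m(x+y) = m(x) + b(x)y - \int_0^y (y-u)\,a(x+u)\,du$ for $y \ge -x$. Taking expectations over $X$ restricted to $\{x+X>0\}$ and using $\Ex(X)=0$ once more to handle the first-order term (the missing mass $\{X\le -x\}$ turns $\Ex(X\,;\,x+X>0)$ into $a(x) \ge 0$, which is harmless since $b(x)\ge 0$), one gets
\begin{equation*}
\Ex\bigl(m(x+X)\,;\,x+X>0\bigr) - m(x) \le b(x)\,a(x) - \tfrac12\Ex\bigl((x+X)^2\,;\,0 < x+X \le c_0\bigr) + (\text{small tail terms}),
\end{equation*}
for a suitable threshold. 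The point is that for large $x$ the integrated remainder is bounded below by a \emph{strictly positive} multiple of a quantity comparable to $a(x)$ (because $X$ is non-degenerate and centered, there is mass on both sides of $-x$ in a controlled way, exactly as in the proof of Proposition \ref{pro:harmonic1}), so that for $A$ large enough the term $A\cdot(\text{that remainder})$ dominates $a(x)$. Meanwhile $A\,b(x)a(x) \le A\,b(0)\,a(x)$ stays under control, and $b(x) \to 0$, so this cross term is lower order. Thus for $x$ beyond some $x_0$ the increment is $\le 0$ regardless of $R$.

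For the remaining compact range $0 \le x \le x_0$, the term $-R\,\Prob[X\le -x]$ does the work: by the Basic Assumption $X$ is non-degenerate and centered, so $\Prob[X\le -x] \ge \Prob[X \le -x_0] =: p_0 > 0$ on $[0,x_0]$ — here I would invoke exactly the constant $c$ with $\Prob[X\le -2c]\ge 1/3$ from the proof of Proposition \ref{pro:harmonic1}, taking $x_0 = 2c$ if necessary, or just note $\Prob[X \le -x]$ is bounded away from $0$ on any compact interval unless $X\ge 0$ a.s., which is excluded. On this range $a(x)$, $b(x)$, $m(x)$ and $\Ex(m(x+X)\,;\,x+X>0)$ are all bounded (finiteness uses $\Ex(X^2)<\infty$, which gives $m(x)<\infty$ for all $x$), so the increment is at most $C(A) - R\,p_0$, which is $\le 0$ for $R$ large. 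Choosing first $A$ (from the large-$x$ analysis) and then $R$ (from the compact-range analysis) finishes the proof. The main obstacle is the middle step: pinning down a genuinely \emph{negative} second-order remainder in the Taylor estimate for $m$ that is uniformly comparable to $a(x)$ as $x\to\infty$, since a crude concavity bound only gives $\le b(x)a(x)$, which is not by itself enough to beat $a(x)$; one needs the quadratic curvature of $m$ coming from the mass of $X$ just to the left of $-x$, and this is where the non-degeneracy of $X$ must be used quantitatively.
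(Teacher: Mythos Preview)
Your overall architecture matches the paper's: write $\Delta(x) = \Ex\bigl(V(x+X);\tau_x>1\bigr) - V(x) = a(x) - R\,F(-x) + A\bigl[\Ex\bigl(m(x+X);X>-x\bigr) - m(x)\bigr]$, show the bracket contributes a term $\le -c\,a(x)$ for large $x$ via second-order information on $m$, and clean up the compact range with $R$. But the crucial middle step is not carried out, and your indication of how to do it points in the wrong direction.

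You say the needed negativity comes ``from the mass of $X$ just to the left of $-x$'', and your displayed bound features $\tfrac12\,\Ex\bigl((x+X)^2;\,0<x+X\le c_0\bigr)$. This cannot work: as $x\to\infty$ there may be \emph{no} mass near $-x$ (take $X$ bounded below), and even when there is, that region does not produce a factor comparable to $a(x)$. The correct source is the mass of $X$ on $(-x,0]$, \emph{away} from $-x$: for $-x<X\le 0$ the Taylor remainder $\int_X^0(u-X)\,a(x+u)\,du$ has $a(x+u)\ge a(x)$ (since $u\le 0$ and $a$ is decreasing), giving remainder $\ge \tfrac12\,a(x)\,X^2$, hence
\[
\Ex(\text{remainder};\,-x<X\le 0)\;\ge\;\tfrac12\,a(x)\,\Ex\bigl(X^2;\,-x<X\le 0\bigr)\;\longrightarrow\;\tfrac12\,a(x)\,\Ex\bigl((X^-)^2\bigr).
\]
The paper reaches the same conclusion more cleanly by two integrations by parts, obtaining the exact identity
\[
\Ex\bigl(m(x+X);X>-x\bigr)-m(x)=\tfrac12\,\Ex\bigl((X^-)^2\bigr)\,a(x)-\int_0^x a(x-y)a(y)\,dy-\int_0^\infty a(x+y)\bar a(y)\,dy,
\]
and then the convolution bound $\int_0^x a(x-y)a(y)\,dy = 2\int_0^{x/2} a(x-y)a(y)\,dy \ge 2a(x)\bigl(b(0)-b(x/2)\bigr)$. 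With the explicit choice $A=4/\Ex\bigl((X^-)^2\bigr)$ this yields $\Delta(x)\le -R\,F(-x)+a(x)\bigl(2A\,b(x/2)-1\bigr)$, negative once $b(x/2)$ is small enough.

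Your small-$x$ argument also has a gap. You assert that $\Prob[X\le -x]$ is bounded away from $0$ on $[0,x_0]$, but this fails whenever $X$ is bounded below and the threshold $x_0$ coming out of the large-$x$ analysis exceeds the essential range of $X^-$. The paper handles the case $F(-x_0)=0$ separately: then $a(x_0)=0$, and for $0\le x<x_0$ the mean value theorem gives $a(x)=(x_0-x)F(-\xi)$ for some $\xi\in(x,x_0)$; with $R=3x_0$ one gets $3a(x)-R\,F(-x)=3(x_0-x)F(-\xi)-3x_0F(-x)\le 0$ because $F(-\xi)\le F(-x)$.
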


Let us first show how this implies the desired result.

\begin{proof}[Proof of \eqref{eq:h1}]
Proposition \ref{pro:AR} implies that for all $x \ge 0$ and $n \in \N$
$$
\begin{aligned}
V(x) &\ge  \Ex \Bigl( V\bigl(x + S(n)\bigr)\,;\, \tau_x > n \Bigr)\\
&\ge \Ex \bigl( x + S(n)\,;\, \tau_x > n \bigr)\\
&= \Ex \bigl( x + S(\min\{n, \tau_x\})\,;\, \tau_x > n \bigr)\\
&= x - \Ex \bigl( x + S(\tau_x)\,;\, \tau_x \le n \bigr).
\end{aligned}
$$
The last identity holds by the martingale property, since $S(n)$
is a centered random walk. We infer that
$$
-\Ex \bigl( x + S(\tau_x)\,;\, \tau_x \le n \bigr) \le V(x) - x = A\, m(x) + R
$$
We can let $n \to \infty\,$, and by monotone convergence
$$
0 \le - \Ex \bigl( x + S(\tau_x)\,;\, \tau_x < \infty \bigr) \le A\, m(x) + R
$$
By \eqref{eq:m}, 
$$
\lim_{x \to \infty} \frac{1}{x}\, \Ex \bigl( x + S(\tau_x)\,;\, \tau_x < \infty \bigr) = 0\,.
$$
Since $h_1(x) = x - \Ex \bigl( x + S(\tau_x)\,;\, \tau_x < \infty \bigr)$, the result
follows.
\end{proof}

 \begin{proof}[Proof of Proposition \ref{pro:AR}]
We want to show that
$$
\Delta(x) = \Ex \Bigl( V\bigl(x + S(1)\bigr)\,;\, \tau_x > 1 \Bigr) - V(x) \le 0 \quad 
\text{for }\;x \ge 0.
$$
We write $F(x) = \Prob[X \le x]$ and $\overline F(x) = 1 - F(x)$. Using that $x= \Ex(x+X)$, 
$$
\begin{aligned}
\Delta(x) &= \Ex\bigl(x+X + A\, m(x+X) + R \,;\, X > -x\bigr) - x - A\,m(x) - R\\
 &= - \Ex ( x+X\,;\,X \le -x \bigr) - R\, F(-x) - A\,m(x)\,F(-x)\\
&\hspace*{4.45cm}
+ A \,\, \Ex\bigl(m(x+X)-m(x)\,;\, X > -x\bigr)\\
&= a(x) - R\, F(-x) - A\,m(x)\,F(-x)
+ A \int_{-x}^{\infty} \bigl(m(x+y) - m(x)\bigr)\, dF(y).
\end{aligned}
$$
We decompose the last integral and integrate twice by parts, recalling that
$m'(x) = b(x), b'(x) = -a(x)$, $a'(x)=-F(-x)$ and $\bar a'(x)=-\overline F(x)\,$ :
$$
\begin{aligned}
 \int_{-x}^{\infty} &\bigl(m(x+y) - m(x)\bigr)\, dF(y)\\
&= \int_{-x}^{0} \bigl(m(x+y) - m(x)\bigr)\, dF(y)
- \int_{0}^{\infty} \bigl(m(x+y) - m(x)\bigr)\, d\overline F(y)\\
&= m(x)\, F(-x) - \int_{-x}^{0} b(x+y)F(y)\,dy + 
\int_{0}^{\infty} b(x+y)\overline F(y)\, dy\\
&= m(x)\, F(-x)  + \int_{0}^{x} b(x-y)a'(y)\,dy
-\int_{0}^{\infty} b(x+y)\overline a'(y)\, dy\\
&= m(x)\, F(-x)  + b(0)a(x) - b(x)a(0) - \int_{0}^{x} a(x-y)a(y)\, dy\\
&\hspace*{4.8cm}+ b(x)\bar a(0) -  \int_{0}^{\infty} a(x+y)\bar a(y)\, dy\,.
\end{aligned}
$$
We observe that $b(x)\bar a(0) - b(x)a(0) = b(x)\bigl(\Ex(X^+) - \Ex(X^-)\bigr)=0$,
and recall that $b(0) = \Ex\bigl((X^-)^{2}\bigr)\big/2$. 
Combining these computations,
$$
\begin{aligned}
\Delta(x) &= a(x) - R\, F(-x) +A \,\frac{\Ex\bigl((X^-)^{2}\bigr)}{2} \, a(x) 
- A \! \int_{0}^{x} \!\!\!\! a(x-y)a(y)\, dy - A \! \int_{0}^{\infty} \!\!\!\! a(x+y)\bar a(y)\, dy\\
&\le a(x) - R\, F(-x) +A \,\frac{\Ex\bigl((X^-)^{2}\bigr)}{2} \, a(x) 
- A \! \int_{0}^{x} \!\!\!\! a(x-y)a(y)\, dy\,.
\end{aligned}
$$
Using that $a(x)$ is monotone decreasing, 
$$
\begin{aligned}
\int_{0}^{x} \!\! a(x-y)a(y)\, dy &= 2 \int_{0}^{x/2} \!\! a(x-y)a(y)\, dy\\ 
&\ge 2a(x)\bigl(b(0)-b(x/2)\bigr) = a(x)\Ex\bigl((X^-)^{2}\bigr) -2a(x)b(x/2).
\end{aligned}
$$
We now choose $A = 4\big/\Ex\bigl((X^-)^{2}\bigr)$. Then we get 
$$
\Delta(x) \le - R\, F(-x) + a(x)\bigl(2A\, b(x/2) -1\bigr) \le -R\, F(-x) + 3a(x).
$$
Since $b(x) \to 0$ as $x \to \infty$, there is $x_0 > 0$ such that
$2A\, b(x_0/2) -1 = 0$ and hence $\Delta(x) \le 0$ for all $x  \ge x_0\,.$

Now suppose first that $F(-x_0) > 0$. Then we choose $R = 3\Ex(X^-)/F(-x_0)$,
and for $x \le x_0$, we have  $\Delta(x) \le -R\, F(-x_0) + 3a(0) =0$.

Finally suppose that $F(-x_0) = 0$, that is, $\Prob[X > -x_0]=1$. Then also $a(x_0)=0$.
This time, we choose $R= 3x_0\,$. For $0 \le x < x_0$, there is $\xi \in (x\,,\,x_0)$
such that $a(x) = a(x_0) - (x_0-x)a'(\xi) = (x_0-x)F(-\xi)$. Then
$$
\Delta(x) \le 3a(x) - R\,F(-x) = 3(x_0-x)\bigl(F(-\xi) - F(-x)\bigr) -3xF(-x) \le 0.
$$
This concludes the proof.
\end{proof}

\end{document}